\definecolor{linkred}{RGB}{255,102,102}
\definecolor{linkblue}{RGB}{95,158,160}
	\titlespacing{\section}{0pt}{12pt}{0pt}
	\titlespacing{\subsection}{0pt}{6pt}{0pt}
\long\def\@footnotetext#1{%
\H@@footnotetext{%
\ifHy@nesting 
\hyper@@anchor{\@currentHref}{#1}%
\else 
\Hy@raisedlink{\hyper@@anchor{\@currentHref}{\relax}}#1%
\fi 
}}
\def\@footnotemark{%
\leavevmode 
\ifhmode\edef\@x@sf{\the\spacefactor}\nobreak\fi 
\H@refstepcounter{Hfootnote}%
\hyper@makecurrent{Hfootnote}%
\hyper@linkstart{link}{\@currentHref}%
\@makefnmark 
\hyper@linkend 
\ifhmode\spacefactor\@x@sf\fi 
\relax 
}%
\renewcommand*\@footnotemark{%
\leavevmode 
\ifhmode 
\edef\@x@sf{\the\spacefactor}%
\FN@mf@check 
\nobreak 
\fi 
\H@refstepcounter{Hfootnote}%
\hyper@makecurrent{Hfootnote}%
\hyper@linkstart{link}{\@currentHref}%
\@makefnmark 
\hyper@linkend 
\ifFN@pp@towrite 
\FN@pp@writetemp 
\FN@pp@towritefalse 
\fi 
\FN@mf@prepare 
\ifhmode\spacefactor\@x@sf\fi 
\relax%
}%
\theoremstyle{plain}
\newtheorem{theorem}{Theorem}[section]
\newtheorem{proposition}[theorem]{Proposition}
\newtheorem{corollary}[theorem]{Corollary}
\theoremstyle{definition}
\newtheorem{remark}[theorem]{Remark}
\newcommand{\R}{{\mathbb R}}
\newcommand{\Hyp}{{\mathbb H}}
\newcommand{\N}{{\mathbb N}}
\newcommand{\Z}{{\mathbb Z}}
\newcommand{\arccosh}{{\,\rm arccosh}}
\newcommand{\diam}{{\rm diam}}
\long\def\symbolfootnote[#1]#2{\begingroup%
\def\thefootnote{\fnsymbol{footnote}}\footnote[#1]{#2}\endgroup}
\def\blfootnote{\xdef\@thefnmark{}\@footnotetext}
\begin{document}

{\Large \bfseries  Chromatic numbers for the hyperbolic plane and discrete analogs}

{\large Hugo Parlier\symbolfootnote[1]{\normalsize Research supported by Swiss National Science Foundation grant number PP00P2\textunderscore 153024}, Camille Petit \symbolfootnote[7]{\normalsize Research supported by Swiss National Science Foundation grant number 200021\textunderscore 153599\\
{\em 2010 Mathematics Subject Classification:} Primary: 05C15, 30F45. Secondary: 05C63, 53C22, 30F10. \\
{\em Key words and phrases:} chromatic numbers, hyperbolic plane, trees}
}

{\bf Abstract.} 
We study colorings of the hyperbolic plane, analogously to the Hadwiger-Nelson problem for the Euclidean plane. The idea is to color points using the minimum number of colors such that no two points at distance exactly $d$ are of the same color. The problem depends on $d$ and, following a strategy of Kloeckner, we show linear upper bounds on the necessary number of colors. In parallel, we study the same problem on $q$-regular trees and show analogous results. For both settings, we also consider a variant which consists in replacing $d$ with an interval of distances. 

\vspace{1cm}

\section{Introduction} \label{s:introduction}

The geometry of the hyperbolic plane $\Hyp$ appears in a large variety of mathematical contexts and, as such, has been extensively studied. Nonetheless, there are certain combinatorial questions about $\Hyp$ about which not much is known. We're mainly interested in a type of chromatic number for $\Hyp$.

The celebrated Hadwiger-Nelson problem is the search for the minimal number of colors necessary to color the Euclidean plane such that any two points at distance $1$ are colored differently. This chromatic number, denoted by $\chi(\R^2)$, has been known to between $4$ and $7$ for a half-century, but significant progress has eluded mathematicians for decades (see \cite{Soifer11} for details). This can - and has - been studied for other metric spaces such as $\R^n$ \cite{SoiferBook}. The choice of distance $1$ for a Euclidean space is not important thanks to homotheties. In general however, the chromatic number of a metric space will depend on a choice of $d>0$ and colorings are required to have points at distance exactly $d$ colored differently. 

For $\Hyp$ the choice of $d$ is important and we denote the $d$-chromatic number $\chi(\Hyp,d)$. As suggested in \cite{Kloeckner}, letting $d$ grow and studying the growth of $\chi(\Hyp,d)$ could be compared to the study of $\chi(\R^n)$ for growing $n$ which is known to grow exponentially in $n$ (see \cite{Taha-Kahle,SoiferBook} and references therein). The analogy will only be interesting if $\chi(\Hyp,d)$ is shown to grow with $d$ but that's not known to be true. The same proof as for the Euclidean plane \cite{Kloeckner} gives a universal lower bound of $4$ for $\chi(\Hyp,d)$ and that seems to be the extent of the current state of knowledge for lower bounds. 

Our focus point will be on upper bounds. The following theorem summarizes some of our concrete results.
\begin{theorem}
For $d\leq 2 \log(2) \approx 1.389...$ we have
$$\chi(\Hyp,d) \leq 9.$$
For $d \leq 2 \log(3)$ 
$$\chi(\Hyp,d) \leq 12.$$
For $d\geq 2 \log(3)$ the following holds:
$$
\chi(\Hyp,d) \leq 5 \left( \left\lceil \frac{d}{\log(4)} \right\rceil +1\right).
$$
\end{theorem}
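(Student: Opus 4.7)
The plan is to work in the upper half-plane model $\Hyp = \{(x,y) : y > 0\}$ and follow the Kloeckner-type strategy: partition $\Hyp$ into horocyclic strips, subdivide each strip scale-invariantly into congruent cells, and color each cell by a pair (vertical label, horizontal label). I will describe this for the general bound; the two special cases are handled by the same scheme with different parameters.

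Fix a strip width $w > 0$ (to be chosen) and partition $\Hyp$ into horocyclic strips $S_k = \{(x,y) : e^{kw} \leq y < e^{(k+1)w}\}$. For two points $(x_1, y_1), (x_2, y_2)$ lying in strips $S_{k_1}, S_{k_2}$ with $|k_1 - k_2| \geq \lceil d/w \rceil + 1$, their purely vertical hyperbolic separation already exceeds $d$, so no such pair can have hyperbolic distance equal to $d$. Consequently, labeling strips cyclically with $V = \lceil d/w \rceil + 1$ labels ensures that same-labeled strips contain no pair of points at hyperbolic distance exactly $d$.

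Within each $S_k$, partition horizontally into Euclidean rectangles $R_{k,j} = [j \alpha e^{kw}, (j+1)\alpha e^{kw}] \times [e^{kw}, e^{(k+1)w}]$ for a constant $\alpha > 0$ to be chosen. By scale-invariance of the upper half-plane metric under $(x,y) \mapsto (cx, cy)$, every $R_{k,j}$ is hyperbolically isometric to $R_{0,0}$, so the relevant estimates need only be done once. Assign to $R_{k,j}$ the color $(k \bmod V,\, j \bmod H)$ for some horizontal period $H$. This uses $HV$ colors; for the main statement we take $w = \log 4$ and $H = 5$, giving $5(\lceil d/\log 4 \rceil + 1)$.

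The choice of $\alpha$ is subject to two conditions, both readily extracted from the distance formula $\cosh d(z,z') = 1 + |z-z'|^2 / (2\,\mathrm{Im}(z)\,\mathrm{Im}(z'))$: the hyperbolic diameter of $R_{0,0}$ must be $< d$ so that no cell itself contains a pair at distance $d$; and any two same-strip cells sharing a horizontal label must be at hyperbolic distance $> d$ (the worst case being when their Euclidean separation is minimal, namely $(H-1)\alpha e^{kw}$). Both conditions reduce to explicit inequalities in $\alpha$, and a direct calculation shows an admissible $\alpha$ exists precisely in the regime $d \geq 2\log 3$ with $H = 5$. The small-$d$ bounds of $9$ and $12$ arise from the same recipe with $(w,H) = (\log 2, 3)$ and $(w,H) = (\log 3, 4)$ respectively, using the fact that narrower strips permit tighter horizontal packings. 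The main obstacle is this horizontal analysis: balancing the two competing requirements on $\alpha$ is what forces the case split at the thresholds $d = 2\log 2$ and $d = 2\log 3$.
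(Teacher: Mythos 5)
Your overall scheme --- horocyclic strips of height $w$, scale-invariant horizontal cells of Euclidean width $\alpha e^{kw}$, cyclic labels in both directions, and the two competing constraints on $\alpha$ (cell diameter $<d$ versus same-label separation $>d$) --- is exactly the paper's checkerboard construction, and your large-$d$ case with $(w,H)=(\log 4,5)$ matches the paper's proof of the bound $5(\lceil d/\log 4\rceil+1)$. The gap is in the two small-$d$ cases. With a \emph{fixed} strip height $w=\log 2$, the cell $R_{0,0}=[0,\alpha]\times[1,2]$ has vertical extent $\log 2\approx 0.693$, so for any $d<\log 2$ it already contains two vertically aligned points at hyperbolic distance exactly $d$, no matter how small $\alpha$ is; the diameter constraint cannot be met. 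More precisely, writing out your two conditions: with $H-1=e^w$ the separation requirement forces $\alpha\geq\sqrt{2(\cosh d-1)}$ while the diagonal of the cell has length $\leq d$ only if $\alpha^2\leq 2e^w\cosh d-e^{2w}-1$, and these are compatible exactly when $\cosh d\geq\frac{e^w+1}{2}$. For $w=\log 2$ that means $d\geq\arccosh(3/2)\approx 0.96$, not all $d\leq 2\log 2\approx 1.39$; for $w=\log 3$ it means $d\geq\arccosh(2)\approx 1.32$, not all $d\leq 2\log 3$. So your parameters leave the ranges $d<0.96$ (for the bound $9$) and, independently, $d<1.32$ (for the bound $12$) unproved.

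The fix, which is what the paper does, is to let the strip height scale with $d$: take $h=d/2$, so that the vertical period is always $\lceil d/h\rceil+1=3$, and then choose the cell width optimally as $w=\min\{d,\arccosh(\frac{1+2e^{h}\cosh d-e^{2h}}{2})\}$. This forces a sub-case analysis at a threshold $d_0\approx 0.56$ where the diameter of the cell switches from being realized by the diagonal to being realized by the base, but it covers the entire interval $(0,2\log 2]$ with $3\times 3=9$ colors (and $(0,2\log 3]$ with $3\times 4=12$). Your write-up correctly identifies that "balancing the two competing requirements on $\alpha$" is the crux, but the balance cannot be struck with strips of height independent of $d$ when $d$ is small.
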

Our methods and proof follow the general strategy of using a "hyperbolic checker board", a method outlined in \cite{Kloeckner} and attributed to Sz\'ekely. Kloeckner \cite{Kloeckner} explains how to get a linear upper bound (in $d$) and asks many interesting questions. Our bounds answer one of the questions (Problem {\bf R}). More importantly, we optimize the strategy (Theorems \ref{thm:chromahypupper1} and \ref{thm:chromahypupper2}) and provide some missing arguments. It is these additional details that allow for improved bounds for both small $d$ and larger $d$ (Theorems \ref{thm:smalld}, \ref{thm:larged} and Proposition \ref{prop:effectivesmall}). Note that these questions could also be asked more generally for any hyperbolic surface, but, as was shown by the authors in \cite{ParlierPetit}, the bounds are very different and grow exponentially in $d$. 

We note that for small $d$, it seems very unlikely that the bounds we provide are close to optimal. This is illustrated in Proposition \ref{prop:funddom} where we show how to use a fundamental domain to bound $\chi(\Hyp,d)$ by $8$, but it only works for certain values of $d$. 

When studying the problem of the hyperbolic plane, we started looking for discrete analogs that might help us understand the structure of subgraphs of $\Hyp$ that occur for larger $d$ and that have hyperbolicity properties. This lead us to looking at infinite $q$-regular trees. Although they are bipartite, we can look at their $d$-chromatic number and study it analogously to $\Hyp$. The upper bounds we obtained are close in spirit to those of $\Hyp$ and obtained by a similar method. We synthesize them as follows (see Theorem \ref{thm:puretree}).
\begin{theorem}
If $d$ is odd then
$$\chi(T_q,d) = 2.$$
If $d$ is even then
$$\chi(T_q,d) \leq (q-1) (d+1).$$
\end{theorem}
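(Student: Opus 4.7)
Any tree, including $T_q$, is bipartite; let $V(T_q) = A \sqcup B$ be the bipartition and use it as a $2$-coloring. In a bipartite graph, two vertices at distance $d$ lie on the same side exactly when $d$ is even. So if $d$ is odd, this $2$-coloring already separates every pair at distance $d$, giving $\chi(T_q, d) \leq 2$. Since $T_q$ contains pairs of vertices at distance $d$ for every $d \geq 1$, two colors are also necessary, so $\chi(T_q, d) = 2$.

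\textbf{Setup for the even case.} Assume $d \geq 2$ is even. Fix an end $\omega$ of $T_q$; this induces a Busemann-type height function $h : V(T_q) \to \Z$ such that each vertex $v$ has one neighbor of height $h(v)-1$ (its parent toward $\omega$) and $q-1$ neighbors of height $h(v)+1$ (its children away from $\omega$). For each vertex $u$, fix once and for all an arbitrary bijection from its $q-1$ children to $\{1, \ldots, q-1\}$, and let $c(v) \in \{1, \ldots, q-1\}$ be the label assigned to $v$ by its parent. Let $a_k(v)$ denote the ancestor of $v$ at distance $k$ toward $\omega$ (so $a_0(v) = v$ and $h(a_k(v)) = h(v) - k$). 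Define
\[
\kappa(v) \;=\; \bigl(\, h(v) \bmod (d+1), \; c(a_{d/2-1}(v)) \,\bigr) \;\in\; \Z/(d+1)\Z \,\times\, \{1, \ldots, q-1\},
\]
which uses exactly $(q-1)(d+1)$ colors.

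\textbf{Verification.} Given $v \neq w$ at distance $d$, let $u$ be the nearest common ancestor of $v$ and $w$ with respect to $\omega$ (the unique point of minimal height on the path from $v$ to $w$). Then $d(v,u) + d(w,u) = d$, so $h(v) - h(w) \in \{-d, -d+2, \ldots, d-2, d\}$. If $h(v) \neq h(w)$, the difference is a nonzero even integer with $|h(v) - h(w)| \leq d < d+1$, hence nonzero modulo $d+1$, so the first coordinates of $\kappa(v)$ and $\kappa(w)$ differ. Otherwise $d(v,u) = d(w,u) = d/2$, so $u = a_{d/2}(v) = a_{d/2}(w)$, and $a_{d/2-1}(v) \neq a_{d/2-1}(w)$ are two distinct children of the common vertex $u$, which therefore received distinct labels under our choice at $u$; the second coordinates of $\kappa(v)$ and $\kappa(w)$ then differ.

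The only genuine obstacle is finding the right "horizontal" coordinate: the natural candidate $c(v)$ fails because two vertices at the same height and distance $d$ need not be siblings. The key observation is that their lowest common ancestor must sit $d/2$ below them, so reading the child-label of the $(d/2-1)$-st ancestor always detects the split at that ancestor and lets the $q-1$ horizontal colors suffice.
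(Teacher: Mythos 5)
Your proposal is correct and follows essentially the same route as the paper: the odd case via the bipartition (equivalently, the parity of the horocyclic strata), and the even case via a Busemann stratification with heights reused modulo $d+1$ and, within a stratum, a color determined by which child of the depth-$d/2$ ancestor the vertex descends through — this is exactly the paper's decomposition into ``bundles'' (common ancestor at distance $\frac{d-2}{2}$) grouped into ``super bundles'' of $q-1$ bundles each. Your explicit coloring function $\kappa$ and the case analysis on $h(v)-h(w)$ make the verification somewhat more formal than the paper's, but the underlying argument is the same.
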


Lower bounds seem difficult, just like for $\Hyp$. One way of obtaining lower bounds is using a type of clique number, here the maximal number of points at pairwise distance $d$.  For even $d$ this clique number is always $q$ (see Proposition \ref{prop:cliqueq}) which is quite far from our upper bounds. We can improve on that, but only slightly, by producing a generalized Moser spindle (Proposition \ref{prop:moserq}). This gives a lower bound of $q+1$. Nonetheless, we know this lower bound is not optimal as by an extensive computer search we found that $\chi(T_3,8) \geq 5$ (see Remark \ref{rem:ammar}). As for $\Hyp$, the combinatorics seem to get out of hand pretty quickly. 

A property shared by both $\Hyp$ and $T_q$ is that both are natural homogeneous Gromov hyperbolic spaces. In particular, they have thin triangles by which we mean that geodesic triangles with long sides look roughly like tripods (and for $T_q$ they {\it are} tripods). This suggests that an interval chromatic problem might be relevant. In this adaptation, we fix an interval $[d,cd]$ with $d>0$ and $c>1$. We ask that points that have distances that lie in $[d,cd]$ be colored differently. 

Kloeckner \cite{Kloeckner} points out that for the Euclidean plane this interval chromatic number grows like $c^2$ for fixed $d$ and growing $c$ and asks whether
$$
\lim_{c\to \infty} \frac{\chi(\R^2, [d,cd])}{c^2}
$$
exists. He states a purposefully vague interval chromatic problem for the hyperbolic plane (Problem {\bf Z} from \cite{Kloeckner}). We're able to show the following results (Theorems \ref{thm:inthypupper} and \ref{thm:inthyplower}):

\begin{theorem}
For sufficiently large $d$, the quantity $\chi(\Hyp,[d,cd])$ satisfies
$$
2 \, e^{\frac{cd-1}{2}} < \chi(\Hyp, [d,cd]) < 2 \left(2 e^{\frac{cd-1}{2}} + 1\right)(cd +1).
$$
\end{theorem}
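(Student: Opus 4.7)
The theorem's two inequalities will be proved by very different methods.

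For the \emph{upper bound}, my plan is to adapt the ``hyperbolic checker board'' strategy of Kloeckner and Sz\'ekely (already used in Theorems~\ref{thm:chromahypupper1} and~\ref{thm:chromahypupper2}) to the interval setting. In the upper half-plane model of $\Hyp$, tile by boxes $T_{n,k} = \{(x,y) : e^n \le y < e^{n+1},\ ke^n \le x < (k+1)e^n\}$ for $(n,k)\in\Z^2$, each of uniformly bounded hyperbolic diameter. The crucial quantitative input is that two points at the same height $y$ with horocyclic offset $L$ are at hyperbolic distance $2\arcsinh(L/2) \approx 2\log L$ for large $L$. Consequently, within any single horocyclic strip two tiles can host a pair at distance in $[d,cd]$ only when their horizontal index difference lies in a window of size $\asymp e^{cd/2}$; across strips, the vertical index offset $|n-n'|$ is essentially the hyperbolic distance up to $O(1)$, so only $|n-n'| \le cd+O(1)$ are in conflict. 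The plan is then to color each tile by a triple of residues: a parity (modulo $2$), a vertical index (modulo $cd+1$), and a horizontal index (modulo $2e^{(cd-1)/2}+1$, with a shift $\phi_n$ compensating for the factor-of-$e$ horocyclic rescaling between successive strips). The product of these three moduli is exactly the stated upper bound $2(cd+1)(2e^{(cd-1)/2}+1)$.

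For the \emph{lower bound}, I would first observe that a pure clique argument cannot reach the target. By Jung's inequality in $\Hyp$ any set of diameter $\leq cd$ lies in a disk of radius $\approx cd/2$, and the corresponding volume-packing bound gives at most $\sim (2/\sqrt{3})\, e^{cd/2}$ points at pairwise distances in $[d,cd]$, strictly below the target $(2/\sqrt{e})\, e^{cd/2}$. A density/measure-theoretic argument is therefore needed. Given any measurable $N$-coloring of $\Hyp$, some color class $A$ has density $\geq 1/N$ in a large ball. Assuming $c \geq 2$, the triangle inequality forces $A$ to decompose into clusters of diameter $<d$ pairwise separated by distance $>cd$; each cluster fits in a hyperbolic disk of radius $d/2$, while the cluster centers form a packing whose pairwise distances exceed $(c+1)d$. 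Combining a B\"or\"oczky-type packing-density bound with the volume ratio $\sinh^2(d/4)/\sinh^2((c+1)d/4) \sim e^{-cd/2}$ yields a density bound of the form $C\cdot e^{-cd/2}$, so $N > 2e^{(cd-1)/2}$ follows after optimizing the constant $C$.

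The main obstacle is the lower bound. The naive B\"or\"oczky constant $3/\pi$ only gives $N \gtrsim (\pi/3)\, e^{cd/2} \approx 1.05 \cdot e^{cd/2}$, while the theorem demands $N > (2/\sqrt{e})\, e^{cd/2} \approx 1.21 \cdot e^{cd/2}$, a roughly $15\%$ gap. Closing it will require either a sharper hyperbolic packing estimate in the large-disk regime or a more refined structural argument on avoiding sets, and the regime $1<c<2$ will need a separate treatment since the cluster decomposition breaks down there. The upper bound is more routine in spirit but demands careful hyperbolic-trigonometric bookkeeping to verify that every same-colored pair of tiles lies outside the conflict range.
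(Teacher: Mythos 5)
Your proposal does not close either inequality, so let me flag the gaps and, along the way, say what the paper actually does.

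\textbf{Upper bound.} The spirit (horocyclic checkerboard) is the same as the paper's, but your parameters do not produce the stated constant. With unit-width tiles $\{e^n\le y<e^{n+1},\ ke^n\le x<(k+1)e^n\}$, two tiles in the same strip with index gap $m$ come as close as $\arccosh\bigl(1+\tfrac{(m-1)^2}{2e^2}\bigr)$, so conflicts persist up to $m\approx 2e\sinh(cd/2)\approx e\,e^{cd/2}\approx 2.72\,e^{cd/2}$, which already exceeds your entire horizontal budget $2\bigl(2e^{(cd-1)/2}+1\bigr)\approx 2.43\,e^{cd/2}$ even if the parity bit is spent on the horizontal direction. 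The paper avoids this by taking rectangles of hyperbolic width $w=d$ and height $h=\log 4$: then the per-stratum conflict range is only $k\approx 4\sinh(cd/2)/\sinh(d/2)\le 4e^{(cd-1)/2}$, each stratum is colored with $k+1\le 2\bigl(2e^{(cd-1)/2}+1\bigr)$ colors, and strata within a window of $\lfloor cd\rfloor+1$ simply receive \emph{disjoint} palettes. That last point also disposes of your ``shift $\phi_n$'': the cross-strip horizontal compatibility you would need to verify is exactly the delicate issue the paper sidesteps by never reusing a horizontal color across nearby strata, and it is left entirely unaddressed in your plan.

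\textbf{Lower bound.} This is where the proposal genuinely fails: you candidly state that your density/packing route leaves a roughly $15\%$ gap in the constant, so the inequality $\chi(\Hyp,[d,cd])>2e^{(cd-1)/2}$ is not established. The paper's own proof is the clique argument you reject: $n$ points on a circle of radius $cd/2$ about $x_0$, consecutive ones at distance exactly $d$, giving $n\ge \pi/\arcsin\bigl(\sinh(d/2)/\sinh(cd/2)\bigr)$. You should be aware, however, that your Jung-type observation cuts both ways: that construction yields $n\sim \pi\sinh(cd/2)/\sinh(d/2)\sim\pi e^{(c-1)d/2}$, which for large $d$ (at fixed $c$) is far below $2e^{(cd-1)/2}=2e^{-1/2}e^{cd/2}$, consistent with your claim that no clique can exceed $\approx(2/\sqrt{3})e^{cd/2}$. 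So your diagnosis that a clique cannot reach the stated constant in the large-$d$ regime appears sound and points to an issue with the constant as printed, but it does not rescue your argument: as submitted, neither your packing bound nor your unfinished density scheme proves the left-hand inequality, and the proposal is therefore incomplete on both sides.
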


For $T_q$, using the same techniques, we show the following (Theorems \ref{thm:inttreeupper} and \ref{thm:inttreelower}).

\begin{theorem}
The quantity $\chi(T_q,[d,cd])$ satisfies
$$
q (q-1)^{\lfloor \frac{cd}{2} \rfloor  - \lceil\frac{d}{2} \rceil}\leq \chi(T_q,[d,cd]) \leq (q-1)^{\lfloor \frac{cd}{2} +1 \rfloor} (\lfloor cd \rfloor+1).
$$
\end{theorem}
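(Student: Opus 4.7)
For the \textbf{upper bound} I would follow a horocyclic labeling analogous to the Kloeckner--Sz\'ekely checker board. Fix an end $\xi$ of $T_q$ and let $h : T_q \to \Z$ be the associated integer Busemann function. Each vertex $v$ has a unique neighbor $p(v)$ with $h(p(v)) = h(v)+1$, called its \emph{predecessor}, and $q-1$ other neighbors. After fixing an ordering of the $q-1$ successors of every vertex, each $v$ carries a label $\ell(v) \in \{1,\dots,q-1\}$ recording its rank among the successors of $p(v)$. Color $v$ by
$$c(v) = \bigl(h(v) \bmod N,\; \ell(v), \ell(p(v)),\dots,\ell(p^{K-1}(v))\bigr),$$
with $N = \lfloor cd \rfloor + 1$ and $K = \lfloor cd/2 \rfloor + 1$; this uses exactly $(\lfloor cd \rfloor + 1)(q-1)^{\lfloor cd/2 + 1\rfloor}$ colors.

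To verify that this is a proper $[d,cd]$-coloring, use that for any $u,v \in T_q$ one has $d(u,v) = |h(u) - h(v)| + 2s$, where $s \geq 0$ is the step at which the rays $u \to \xi$ and $v \to \xi$ first meet. Suppose $c(u) = c(v)$. If $h(u) \neq h(v)$, then $|h(u)-h(v)|$ is a positive multiple of $N$ and hence $\geq N > cd$, forcing $d(u,v) > cd$. If $h(u) = h(v)$, then $d(u,v) = 2s$; the two rays last disagree at position $s-1$, where the labels $\ell(p^{s-1}(u))$ and $\ell(p^{s-1}(v))$ differ, so matching labels on positions $0,\dots,K-1$ forces $s-1 \geq K$, whence $d(u,v) \geq 2K+2 > cd$. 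Either way $d(u,v) \notin [d, cd]$.

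For the \textbf{lower bound} the plan is to exhibit a clique of size $q(q-1)^{\lfloor cd/2 \rfloor - \lceil d/2 \rceil}$. Fix a base vertex $v_0$ and set $r = \lfloor cd/2 \rfloor$, $s^* = r - \lceil d/2 \rceil$ (which is non-negative for the relevant range of parameters). The sphere $\{w : d(v_0, w) = s^*+1\}$ contains $q(q-1)^{s^*}$ vertices; for each such $w$ pick any descendant $u_w$ at depth $r$ from $v_0$, which exists because $r \geq s^*+1$. For distinct $w_1, w_2$ the least common ancestor of $u_{w_1}, u_{w_2}$ relative to $v_0$ coincides with that of $w_1, w_2$, and this ancestor sits at depth $\leq s^*$. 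Hence
$$d(u_{w_1}, u_{w_2}) \in \bigl[2(r - s^*),\, 2r\bigr] = \bigl[2\lceil d/2 \rceil,\, 2\lfloor cd/2 \rfloor\bigr] \subseteq [d, cd],$$
so the $q(q-1)^{s^*}$ vertices $\{u_w\}$ form the required clique. The main obstacle is not conceptual --- the two halves mirror each other --- but requires patient bookkeeping with floors, ceilings and indices, notably confirming that the label at position $s-1$ is covered by the first $K$ coordinates of $c(\cdot)$ in the upper bound, and that the divergence depth $s^*+1$ yields the claimed inclusion $[2\lceil d/2 \rceil, 2\lfloor cd/2 \rfloor] \subseteq [d,cd]$ in the lower bound.
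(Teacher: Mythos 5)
Your proof is correct and follows essentially the same strategy as the paper: a horocyclic stratification with colors reused every $\lfloor cd\rfloor+1$ strata and, within a stratum, colors determined by ancestry up to depth roughly $cd/2$ for the upper bound, and a sphere pushed outward to radius $\lfloor cd/2\rfloor$ for the clique. If anything your bookkeeping is slightly more careful: in the lower bound the paper takes the sphere of radius $\lfloor cd/2\rfloor - \lceil d/2\rceil$ and records its size as $q(q-1)^{\lfloor cd/2\rfloor - \lceil d/2\rceil}$ (off by one factor of $q-1$), whereas your choice of divergence depth $s^*+1$ actually delivers the stated clique size with the distances landing in $[2\lceil d/2\rceil, 2\lfloor cd/2\rfloor]\subseteq[d,cd]$.
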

The lower bounds in both theorems above come from lower bounds on the (interval) clique numbers. 

{\bf Acknowledgements.}

We heartily thank Ammar Halabi for graciously writing the code necessary to test chromatic numbers for regular trees. Remark \ref{rem:ammar} is thanks to him. 

\section{Preliminaries}

\subsection{Chromatic numbers of metric spaces}

The chromatic number of a graph $G$ is the minimal number of colors needed to color the vertices of a graph such that any two adjacent vertices are of different colors. 

Given a metric space $(X,\delta)$ and a number $d>0$, we define the chromatic number $\chi((X,\delta),d)$ relative to $d>0$ to be the minimal number of colors needed to color all points of $X$ such that any $x,y \in X$ with $\delta(x,y) = d$ are colored differently. We'll sometimes refer to the $d$-chromatic number of $(X,\delta)$. One can define the chromatic number of a metric space via the chromatic number of graphs as follows. Given a metric space $(X, \delta)$ and a real number $d>0$, we construct a graph $G(\{X,\delta\},d)$  with vertices points of $X$ and an edge between points if they are exactly at distance $d$. 

We'll refer to the above chromatic numbers as being {\it pure} chromatic numbers, as opposed to the notion we'll introduce now.

One variant on the pure chromatic number is to ask that points that lie at a distance belonging to a given set be of different colors. An example of this is the chromatic number of $G^k$ power of a graph $G$. This is equivalent to asking that any two vertices at distance belonging to the set $\{1,\hdots,k\}$ be colored differently. More generally, for a metric space $(X,\delta)$ and a set of distances $\Delta$, the $\Delta$-chromatic number $\chi((X,\delta),\Delta)$ is the minimal number of colors necessary to color points of $X$ such that any two points at distance belonging to $\Delta$ are of a different color. As above, this can be seen as the chromatic number of a graph $G(\{X,\delta\}, D)$ where vertices are points of $X$ and edges belong to $D$. We'll be particularly interested in this problem when $D$ is an interval $[a,b]$. We'll refer to these quantities as interval chromatic numbers.

A straightforward way of obtaining a lower bound for chromatic numbers of graphs is via the {\it clique number} which is the order of the largest embedded complete graph. The clique number $\Omega(G)$ clearly satisfies $\Omega(G) \leq \chi(G)$. Similarly we define $\Omega((X,\delta),d)$, resp. $\Omega((X,\delta),\Delta)$, to be the size of the largest number of points of $X$ all pairwise at distance exactly $d$, resp. all at distance lying in $\Delta$.

\subsection{Our metric spaces}

The two types of metric spaces we'll work with are the hyperbolic plane $\Hyp$ and $q$-regular trees (for $q\geq 3$). The unique infinite tree of degree $q$ in every vertex will be denoted $T_q$. Both are viewed as metric space, $\Hyp$ with the standard Poincar\'e metric (an explicit distance formula will be provided below) and $T_q$ as a metric space on vertices obtained by assigning length $1$ to each edge. Although we think of regular trees as a type of discrete analog of the hyperbolic plane, note that the two metric spaces are not even quasi-isometric to one another.

In the next section we'll briefly describe a metric relationship between $\Hyp$ and $T_q$, namely a quasi-isometric embedding of $T_q$ into $\Hyp$. It is provided for motivational purposes and, as it will not be used in the sequel, it can be skipped by the less interested reader.

\subsection{Locally flat models of the hyperbolic plane and geometrically embedded trees}

We describe a locally "flat" model of the hyperbolic plane which is quasi-isometric to $\Hyp$ into which regular trees geometrically embed.

One way of constructing a space which shares properties with hyperbolic plane is to paste together copies of an equilateral Euclidean triangle $\tau$ with sides lengths $1$. 

To do so, fix an integer $n\geq 6$ and construct a simply connected space as follows. Starting with a base copy of $\tau$, paste $n$ copies of $\tau$ around each vertex to obtain a larger simply connected shape. Then we repeat the process indefinitely to get an unbounded simply connected domain which we'll denote $H_n$. 

For example: if $n=6$ then the result is the Euclidean plane. In particular, vertices of copies of $\tau$ map to points of angle $2 \pi$. 

However, for any $n\geq 7$, the set of vertices maps to singular points of angle $\frac{\pi }{3} n$. We note that, for all $n\geq 6$, $H_n$ is a $\mathrm{CAT}(0)$ metric space. 

The following is well-known to experts, but we provide a sketch proof for completeness.

\begin{proposition}
For $n\geq 7$, $H_n$ and $\Hyp$ are quasi-isometric. 
\end{proposition}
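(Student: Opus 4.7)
The plan is to invoke the Milnor--Schwarz lemma: if a finitely generated group $G$ acts properly discontinuously and cocompactly by isometries on two proper geodesic metric spaces, then both are quasi-isometric to $G$ equipped with any word metric, and hence quasi-isometric to each other. So it suffices to exhibit a single group acting in this way on both $H_n$ and $\Hyp$.

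The group I propose is the full combinatorial symmetry group $G$ of the tessellation of $H_n$ by copies of $\tau$. Starting from a fixed triangle $\tau_0$, the barycentric subdivision produces a $30$-$60$-$90$ right triangle $\Delta_0$ with corners at a vertex $V$, at the midpoint $M$ of an incident edge, and at the barycenter $C$ of $\tau_0$. The three reflections through the sides of $\Delta_0$ each have order $2$ and generate $G$. At $M$ the two incident sides meet at angle $\pi/2$, so their product has order $2$; at $C$ they meet at $\pi/3$, so the product has order $3$; at $V$ they meet at $\pi/6$, so the product rotates the cone at $V$ by $\pi/3$, and since the full cone angle at $V$ equals $n\pi/3$, this rotation has order $n$. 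Thus $G$ is the Coxeter triangle group of type $(2,3,n)$. The action is by isometries, cocompact (the quotient is $\Delta_0$), and properly discontinuous because the point stabilizers are finite everywhere (trivial on the interior of a triangle, of order $2$ on the interior of an edge, and of order $6$, $4$, or $2n$ at $C$, $M$, or $V$ respectively).

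For $n\geq 7$, the angles $\pi/2, \pi/3, \pi/n$ sum to strictly less than $\pi$, so there exists a genuine hyperbolic triangle with these angles, and the group generated by reflections through its sides is the standard cocompact hyperbolic $(2,3,n)$ triangle group. This produces an abstractly identical copy of the same Coxeter group acting properly discontinuously and cocompactly by isometries on $\Hyp$. Since both $H_n$ and $\Hyp$ are proper geodesic metric spaces, the Milnor--Schwarz lemma yields quasi-isometries $G\to H_n$ and $G\to \Hyp$; combining one with a quasi-inverse of the other produces the claimed quasi-isometry between $H_n$ and $\Hyp$.

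The main obstacle I anticipate is the bookkeeping around the cone points: making rigorous that the rotation by $\pi/3$ at $V$ extends to a global isometry of $H_n$ of order exactly $n$, and that the relations one reads off from $\Delta_0$ constitute a complete presentation of $G$ rather than a proper quotient. Both verifications reduce to standard facts about piecewise-flat cone surfaces and Coxeter reflection groups, and once they are in hand the Milnor--Schwarz application is immediate.
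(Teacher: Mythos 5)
Your proof is correct in outline but takes a genuinely different route from the paper's. The paper argues directly: it passes to the dual cell decomposition of $H_n$ (singular Euclidean $n$-gons meeting three to a vertex), matches it with the tiling of $\Hyp$ by regular hyperbolic $n$-gons of angle $2\pi/3$, chooses a dihedrally equivariant bijection of bounded distortion between the two model polygons, and pastes these together tile by tile into a global bijection $H_n \to \Hyp$ which is then a quasi-isometry. You instead exhibit a common cocompact lattice --- the $(2,3,n)$ triangle group, acting by reflections on the barycentric subdivision of $H_n$ and as the classical hyperbolic reflection group on $\Hyp$ --- and apply Milnor--Schwarz twice. Your route buys robustness and avoids constructing any explicit map; its cost is exactly the point you flag, namely that the group generated by the three reflections of $H_n$ is the abstract Coxeter group $W(2,3,n)$ and not a proper quotient of it. That verification is genuinely needed (Milnor--Schwarz requires a single group acting geometrically on both spaces), but it does hold: $H_n$ is simply connected and $\Delta_0$ is a strict fundamental domain, so the standard developing-map/Poincar\'e argument for geometric reflection groups identifies the chamber system of $H_n$ with the Coxeter complex of $W(2,3,n)$. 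With that fact cited, your argument closes; it is no less rigorous than the paper's own sketch, which itself leaves to the reader the passage from tile-by-tile bounded distortion to a global quasi-isometry.
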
 

\begin{proof}
To see this, it suffices to construct a quasi-isometry between $\Hyp$ and $H_n$. Consider the cell decomposition of $H_n$ dual to its triangulation: each cell is an $n$-gon with a singularity in its center. As it is dual to a triangulation, the valency in each vertex of this $n$-gon decomposition is three. Denote by $P_n$ a copy of this singular $n$-gon. 

Now consider the unique tiling (up to isometry) of $\Hyp$ by regular $n$-gons of angles $\frac{2\pi}{3}$. Denote by $Q_n$ this hyperbolic $n$-gon we use to tile. Note that both $P_n$ and $Q_n$ have the $n$-th dihedral group $D_n$ as isometry group.

Let $f: H_n \to Q_n$ be any bijective map which, for simplicity, we'll suppose sends the boundary to the boundary and is invariant by the actions of $D_n$. (This is actually not strictly necessary but it simplifies the discussion somewhat.) The map $f$, by compactness of $H_n$ and $Q_n$, is of bounded distortion. 

There are now natural maps between $\Hyp$ and $H_n$ which consists on replacing each regular hyperbolic $n$-gon of the tiling by the singular Euclidean analogue and vice-versa. Points are associated via $f$ and by invariance of $D_n$, coincide with respect to the pasting. The result is a bijection between $\Hyp$ and $H_n$ which is clearly of bounded distortion. 
\end{proof}

The reason we've introduced $H_n$ is that we have the following embedding. By isometric embedding we mean an embedding between metric spaces $(X_1,\delta_1) \hookrightarrow (X_2,\delta_2)$ such that the induced metric on $X_1$ by $X_2$ coincides with the metric $\delta_1$. 

\begin{proposition}\label{prop:embed}
For any $q \leq \lfloor \frac{n}{3} \rfloor$, $T_q$ isometrically embeds into $H_n$. 
\end{proposition}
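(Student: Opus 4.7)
The plan is to build the embedding vertex by vertex, inductively from a root, and exploit the fact that $H_n$ is $\mathrm{CAT}(0)$ so that a local-geodesic argument upgrades to a metric embedding.

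First I would fix a vertex $v_0$ of $H_n$ to play the role of the root of $T_q$ and recall the geometry around a vertex of $H_n$: exactly $n$ unit-length edges emanate from $v_0$ (the edges of the $n$ triangles glued there), and the cone angle equals $n\pi/3$. The key local observation is that a path entering $v_0$ along one edge and leaving along another is a local geodesic at $v_0$ precisely when the angle it makes on each side of $v_0$ is at least $\pi$, which translates to having at least three triangles between the two edges on each side of the vertex.

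Given this, I would choose, at each vertex of $H_n$ that we use, a collection of $q$ outgoing edges whose cyclic positions around the vertex are pairwise separated by at least three triangles (on both sides); writing the $q$ gaps as positive integers summing to $n$, such a choice exists exactly when $3q\leq n$, that is $q\leq \lfloor n/3 \rfloor$. At $v_0$ this furnishes the $q$ edges mapping to the star of the root of $T_q$. At any other used vertex one edge is already prescribed (the one coming from its parent), and the condition $3q \leq n$ still allows $q-1$ further edges to be placed at cyclic distance $\geq 3$ from the prescribed one and from each other (e.g.\ at positions $0, 3, 6, \dots, 3(q-1)$). Iterating this choice produces a map $\varphi : T_q \to H_n$ sending each edge of $T_q$ to a unit edge of $H_n$, with the property that at every interior vertex along the image of any tree path, the incoming and outgoing edges are separated by at least three triangles on both sides.

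To check that $\varphi$ is an isometric embedding, I would take any two vertices $x,y \in T_q$ and consider the image of the unique geodesic in $T_q$ between them. By the separation condition above, at each interior vertex this image makes angles $\geq \pi$ on both sides, so it is a local geodesic in $H_n$; since $H_n$ is $\mathrm{CAT}(0)$, local geodesics are global geodesics, hence $d_{H_n}(\varphi(x),\varphi(y))$ equals the length of this path, which is $d_{T_q}(x,y)$. Injectivity of $\varphi$ then follows automatically from positivity of distances between distinct tree vertices.

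The main subtlety, and the only place care is needed, is the inductive edge-selection step: one must verify that once the incoming edge is fixed at a non-root vertex, there is still enough room to place $q-1$ further edges respecting the three-triangle spacing on both sides simultaneously. This is where the bound $q\leq \lfloor n/3 \rfloor$ is sharp and cannot be relaxed; everything else is a straightforward local-to-global consequence of $\mathrm{CAT}(0)$ geometry.
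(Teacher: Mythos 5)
Your proof is correct and follows essentially the same route as the paper's: an inductive vertex-by-vertex construction mapping edges of $T_q$ to unit edges of $H_n$ so that image edges at each vertex are separated by at least three triangles (angle $\geq \pi$) on both sides, followed by the $\mathrm{CAT}(0)$ local-to-global geodesic argument. Your accounting of the spacing condition at non-root vertices is in fact slightly more explicit than the paper's.
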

\begin{proof}
There are two things to prove. The first is that there is an embedding. To do so, we think of $T_q$ as being embedded in the plane (this gives us an orientation at every vertex). 

By vertices of $H_n$ we mean the set of points that are the image of the vertices of the triangles used to construct $H_n$. By edges of $H_n$, we mean the image of the edges of the triangles (all of length $1$). We're going to map vertices and edges of $T_q$ to their counterparts in $H_n$

Take a base vertex $v_0$ of $T_q$ and map it to a base vertex $w_0$ of $H_n$. Now map an edge $e$ of $T_q$ incident to $v_0$ to an edge $e'$ of $H_n$ incident to $w_0$. 

We now map edges incident in $v_0$ to edges incident in $w_0$. Edges around $v_0$ and $w_0$ both have orientations and are ordered relatively to $e$ and $e'$. Following this orientation, edges incident to $v_0$ are mapped to edges incident to $w_0$ ensuring that if any two edges in $P_n$ that are image edges form an angle of at least $\pi$. In other terms, following the order around $w_0$, there are at least two edges between image edges. Note that this was possible thanks to the condition on $q$ and $n$. 

We've now mapped all edges of $T_q$ incident in $v_0$. We now repeat this to map to all vertices distance $1$ from $v_0$, and then inductively, to those at distance $r \geq 2$. This provides us with an embedding $\varphi$.

By construction, the embedding is geometric. Indeed, let $v,w$ be vertices of $\varphi(T_q)$ and let $\gamma$ be the unique simple path between them contained in $\varphi(T_q)$ (image of the unique geodesic in $T_q$). We want to check that $\gamma$ is locally geodesic everywhere. As the space $H_n$ is $\mathrm{CAT}(0)$, this will guarantee that $\gamma$ is the unique geodesic in $H_n$ between $v$ and $w$. To do so we check the angle conditions along $\gamma$. By construction, the angle is $\pi$ along the flat portions of $\gamma$ and at least $\pi$ in every vertex by construction. This proves that our embedding is isometric.
\end{proof}

In terms of chromatic numbers, the isometric embedding above provides the following immediate lower bound. 
\begin{corollary}
For all $d \geq 1$ and $q\geq 3$ and $n$ satisfying $n\geq \lfloor \frac{n}{3} \rfloor$:
$$
\chi(T_q, d) \leq \chi(H_n, d).
$$
\end{corollary}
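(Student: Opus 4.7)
The plan is to observe that this is an immediate consequence of Proposition \ref{prop:embed}, via the standard fact that chromatic numbers are monotone under isometric embedding for the distance graph $G(\{X,\delta\},d)$. (I also read the condition on $n$ and $q$ as the typo-corrected hypothesis $q \leq \lfloor n/3 \rfloor$ of Proposition \ref{prop:embed}, since this is what makes the embedding available.)

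First I would invoke Proposition \ref{prop:embed} to obtain an isometric embedding $\varphi : T_q \hookrightarrow H_n$. Next, let $\chi := \chi(H_n,d)$ and fix an optimal coloring $c : H_n \to \{1,\dots,\chi\}$ certifying this value, so that $c(x) \neq c(y)$ whenever the $H_n$-distance between $x$ and $y$ equals $d$. Define the pulled-back coloring $c' := c \circ \varphi : T_q \to \{1,\dots,\chi\}$.

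To check that $c'$ is a valid $d$-coloring of $T_q$, take any two vertices $u,v \in T_q$ at tree distance exactly $d$. Because $\varphi$ is isometric for the induced metric, the $H_n$-distance between $\varphi(u)$ and $\varphi(v)$ is also $d$, hence $c(\varphi(u)) \neq c(\varphi(v))$, i.e.\ $c'(u) \neq c'(v)$. Thus $T_q$ admits a proper $d$-coloring with $\chi(H_n,d)$ colors, yielding $\chi(T_q,d) \leq \chi(H_n,d)$.

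There is essentially no obstacle here; the only delicate point is to make sure that the word \emph{isometric} in Proposition \ref{prop:embed} is being used in the sense of preservation of distances (not merely local isometry of the embedded metric tree), which is explicitly established in that proposition via the $\mathrm{CAT}(0)$/angle argument. Given that, the pullback argument is the entire proof.
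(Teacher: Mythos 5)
Your pullback argument is exactly the intended one: the paper states the corollary as an immediate consequence of Proposition \ref{prop:embed} and gives no separate proof, and restricting an optimal $d$-coloring of $H_n$ along the distance-preserving embedding $\varphi$ is precisely why it is immediate. You are also right that the hypothesis ``$n\geq \lfloor \frac{n}{3}\rfloor$'' is a typo for $q\leq \lfloor \frac{n}{3}\rfloor$, the condition under which the embedding exists.
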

\section{Pure chromatic number problem}

In this section we'll be concerned with finding upper and lower bounds for the $d$-chromatic number for both the hyperbolic plane and for $q$-trees. We begin with the former.

\subsection{Bounds for the hyperbolic plane}

We'll be using the upper half plane model $\Hyp$ of the hyperbolic plane. The hyperbolic distance formula for $\Hyp = \{ (x,y) \in \R^2 \mid y >0 \}$ can be expressed as
$$
d_{\Hyp}\left((x,y), (x',y')\right) = \arccosh\left( 1+ \frac{(x-x')^2 + (y-y')^2}{2 y y'}\right).
$$

We want to minimally color $\Hyp$ for given $d>0$ such that any two points at distance $d$ are of different colors. 

One quick word about lower bounds for these quantities. Getting a good lower bound via induced complete graphs is futile -  just like for the Euclidean plane the clique number has an upper bound of $3$. And just like in the Euclidean plane, a lower bound of $4$ for any $d$ can be obtained by finding a metric copy of the Moser spindle. It seems likely that one can do better, at least for large $d$, but the combinatorics quickly get out of hand. 

So we focus on upper bounds. The general strategy will always be the same: cover $\Hyp$ with monochromatic regions of diameter less than $d$ and ensure that any two regions of the same color are sufficiently far apart. 

\subsubsection{Construction of a hyperbolic checkerboard}

To color the hyperbolic plane we use a horocyclic checker board constructed as follows. According to \cite{Kloeckner} where it is used to color the hyperbolic plane, this construction is due to Sz\'ekely. Unfortunately some of the key details in \cite{Kloeckner} are incorrect so for completeness we provide a detailed construction.

The method consists in tiling the hyperbolic plane by isometric rectangles where two sides of the rectangle are sub arcs of geodesics with a common point at infinity (so called ultra parallel curves) and the other two sides are horocyles surrounding that same point at infinity. 

To do so formally, we begin by cutting the hyperbolic plane into infinite strips bounded by horocycles as follows. We fix $h>0$ and set $j\in \Z$, the strip $S_j(h)$ to be the set
$$S_j(h) := \left\{ (x,y) \in \Hyp \mid y \in [e^{jh}, e^{(j+1) h}[ \right\}.
$$
We'll sometimes call $S_j(h)$ a {\it strata}. In this model the horizontal lines $y = e^{jh}$ are horocycles around $\infty$ and $h$ is the distance between the horocycles $y = e^{jh}$ and $y = e^{(j+1)h}$. 

Roughly speaking, we now subdivide the strips $S_j(h)$ by cutting them along vertical lines (geodesics). We fix a value $w>0$ and cut along vertical geodesic segments in a way that the two endpoints of the base of each rectangle are at distance $w$. As we don't want the rectangles to overlap even in their boundary, we choose that the vertical geodesic segments belongs to the rectangle on its right. There is some choice is doing the above procedure but we will not make use of that choice in any way (and in fact trying to use this horizontal parameter is tricky). One possible choice leads to the following rectangles which for fixed $h,w$ we can label with elements of $\Z^2$:
$$
R_{i,j}(h,w) := \left\{ (x,y) \in \Hyp \mid x \in [ r j e^{ih}, r(j+1) e^{ih} [,\, y \in  [e^{jh}, e^{(j+1) h}[ \right\}
$$
where
$$r := \sqrt{2(\cosh(w) - 1)}.$$

\begin{figure}[h]
{\color{linkblue}
\leavevmode \SetLabels
\L(0.68*.73) $\arccosh\left( 1+ \frac{\cosh(w)-1}{e^{2h}}\right)$\\
\L(0.68*.5) $h$\\
\L(0.68*.37) $w$\\
\L(0.34*.33) $(0,1)$\\
\L(0.34*.6) $(0,e^h)$\\
\L(0.58*.33) $(r,1)$\\
\L(0.58*.6) $(r,e^h)$\\
\endSetLabels
\begin{center}
\AffixLabels{\centerline{\epsfig{file =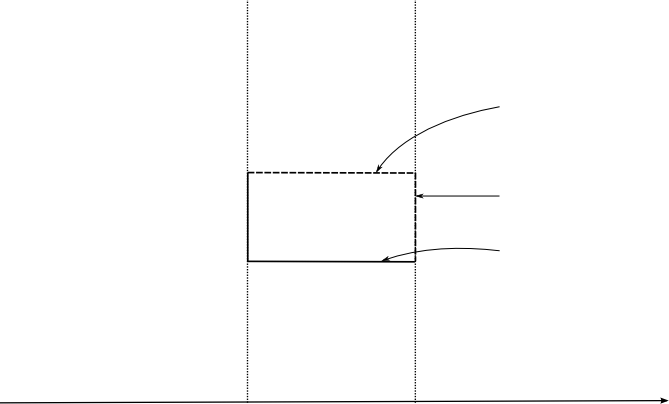,width=10.0cm,angle=0} }}
\vspace{-30pt}
\end{center}
\caption{The rectangle $R_{0,0}(h,w)$} \label{fig:Rectangle}
}
\end{figure}

If the above formula is slightly confusing, it's useful to keep in mind one particular copy of the rectangle since all of them are isometric. The rectangle $R_{0,0}(h,w)$ has its four vertices given by the points $(0,1)$, $(r,1)$, $(0, e^h)$ and $(r,e^{h})$. Although the base points of the rectangle are at distance $w$, the upper corners are closer to each other and their distance is in fact
$$
\arccosh\left( 1+ \frac{\cosh(w)-1}{e^{2h}}\right).
$$

Understanding the geometry of the rectangles is key in our argument and we want to understand the diameter of the (closed) rectangle.

Via a simple variational argument, the diameter is realized by some pair of points that lie in the corners. As discussed above, the distance between the upper corners is smaller than the distance between points on the base but one possibility is that the bottom corners realize the diameter and in fact for fixed $w$ and small enough $h$, this is the case. The other possibility is that opposite corners realize the diameter and their distance is 
$$
\arccosh\left( 1+ \frac{2(\cosh(w)-1)+ (e^h-1)^2}{2 e^h}\right).
$$
Again, if $h$ is sufficiently large, the above value will be the diameter. 

To see the above observations, it suffices to look at the distance formula for a pair of points $(0,y)$ and $(r,y')$. We think of $y'$ as being a variable beginning at $y'=y$. Now as $y'$ increases, their distance begins by decreasing until eventually reaching a minimum and then increasing towards infinity. Thus there is a certain value of $y'>y$ for which the distance between $(0,y)$ and $(r,y')$ is exactly that of the distance between $(0,y)$ and $(r,y)$. This shows that the pair of points that realize the diameter is either the base or the diagonal and this depends on how large $h$ is. All in all, we've shown the following.

\begin{proposition}
The rectangle $R_{i,j}(w,h)$ satisfies
$$
\diam\left( R_{i,j}(w,h) \right) = \max   \bigg\{ w, \arccosh\left( 1+ \frac{2(\cosh(w)-1)+ (e^h-1)^2}{2 e^h}\right) \bigg\}.
$$
\end{proposition}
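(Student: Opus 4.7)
The plan is to use the variational outline sketched in the text but make each reduction explicit. Since all the rectangles $R_{i,j}(h,w)$ are isometric (they are translates of $R_{0,0}(h,w)$ by ambient hyperbolic isometries), it suffices to compute the diameter of $R_{0,0}(h,w)$, whose closure has corners $(0,1)$, $(r,1)$, $(0,e^h)$, $(r,e^h)$. Because $\arccosh$ is monotone increasing, maximizing the hyperbolic distance between two points $p=(x_1,y_1)$ and $q=(x_2,y_2)$ of the closed rectangle is equivalent to maximizing
$$
f(x_1,y_1,x_2,y_2) \;=\; \frac{(x_1-x_2)^2+(y_1-y_2)^2}{2\,y_1 y_2}
$$
over $(x_1,x_2)\in[0,r]^2$ and $(y_1,y_2)\in[1,e^h]^2$.

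First I would reduce the $x$-variables to the endpoints: for fixed $y_1,y_2,x_2$, the function $x_1\mapsto f$ is a convex parabola in $x_1$, so it is maximized at $x_1\in\{0,r\}$, and symmetrically for $x_2$. By the obvious symmetry we can assume $x_1=0$, $x_2=r$, at which point $(x_1-x_2)^2=r^2=2(\cosh(w)-1)$.

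Next I would reduce the $y$-variables. Writing $g(y_1,y_2)=(r^2+(y_1-y_2)^2)/(2y_1 y_2)$, a direct computation of $\partial g/\partial y_1$ shows that its sign is governed by $y_1^2-y_2^2-r^2$, so $g$ is strictly convex along the slice of fixed $y_2$ with a unique interior minimum; hence its maximum on $[1,e^h]$ is attained at $y_1\in\{1,e^h\}$, and similarly for $y_2$. This forces the diameter to be realized by a pair of corners.

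Finally, I would compare the three essentially distinct corner pairs. Between the two base corners the distance is $\arccosh(1+r^2/2)=w$. Between the two top corners it is $\arccosh(1+(\cosh(w)-1)/e^{2h})$, which is strictly less than $w$ for $h>0$ and so cannot realize the diameter. Between two diagonally opposite corners the distance is
$$
\arccosh\!\left(1+\frac{2(\cosh(w)-1)+(e^h-1)^2}{2e^h}\right),
$$
yielding exactly the formula in the statement as the maximum of these two remaining candidates. The only mildly delicate step is the $y$-derivative computation showing that the unique critical point is a minimum rather than a maximum; everything else is a comparison of three closed-form quantities.
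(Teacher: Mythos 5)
Your proposal is correct and follows essentially the same route as the paper: reduce to the corners of $R_{0,0}(h,w)$ by a variational argument and then compare the three corner distances, discarding the top pair since $(\cosh(w)-1)/e^{2h}<\cosh(w)-1$. The only difference is that you make the corner reduction fully explicit (convexity in each $x_i$, and the sign of $\partial g/\partial y_1$ governed by $y_1^2-y_2^2-r^2$, which indeed gives convexity in $y_1$ since $\partial^2 g/\partial y_1^2=(y_2^2+r^2)/(y_1^3y_2)>0$), where the paper only sketches this step qualitatively.
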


We also need to get a handle on the distance between consecutive rectangles in a stratum (so rectangles $R_{i,j}(w,h)$ and $R_{i',j}(w,h)$ for $i'>i$). By the same considerations as above the distance will be realized (in the closure of the rectangles) by the upper right corner of $R_{i,j}(w,h)$ and the upper left corner of $R_{i',j}(w,h)$. The distance formula gives us
$$
d_{\Hyp} \left( R_{i,j}(w,h), R_{i',j}(w,h) \right) = \arccosh \left( 1+ \frac{(i-i')^2 (\cosh(w)-1)}{e^{2h}}\right).
$$
With this in hand we can construct a well-adapted checker board in function of the parameter $d$. The method is to use a checkerboard with rectangles of diameter $\leq d$.  We color stratum by stratum cyclically using the above formula to ensure that if rectangles are horizontally sufficiently far apart, they can be colored the same way. We then need to repeat the above process with completely new colors until the strata are sufficiently far apart (see Figure \ref{fig:distancerectangle}). This requires exactly $\lceil \frac{d}{h} \rceil + 1$ strata to be colored before repeating the procedure. 

This leads to the following general statement that we state as a theorem.

\begin{theorem}\label{thm:chromahypupper1}
The $d$-chromatic number of the hyperbolic plane satisfies 
$$
\chi(\Hyp,d) \leq (k + 1) \left( \left\lceil \frac{d}{h} \right\rceil +1 \right)
$$
for any $w,h$ that satisfy
$$
 \max   \bigg\{ w, \arccosh\left( 1+ \frac{2(\cosh(w)-1)+ (e^h-1)^2}{2 e^h}\right) \bigg\} \leq d
$$
and $k$ is the smallest integer satisfying
$$
k \geq e^h \sqrt{\frac{\cosh(d)-1}{\cosh(w)-1}}.
$$
\end{theorem}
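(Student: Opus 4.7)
The plan is to formalise the hyperbolic checkerboard coloring described immediately above the theorem and verify that it works with the two parameters $(w,h)$ and $k$ satisfying the stated inequalities. Set $N := \lceil d/h\rceil + 1$, and assign to each tile $R_{i,j}(h,w)$ of $\Hyp$ the color
$$c(i,j) = \bigl(i \bmod (k+1),\ j \bmod N\bigr) \in \{0,\dots,k\}\times\{0,\dots,N-1\}.$$
This uses $(k+1)N$ colors, so the theorem reduces to showing that any two points of $\Hyp$ receiving the same color lie at distance different from $d$.

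First, consider two points inside the same tile. By the diameter proposition stated just above the theorem together with the first hypothesis, $\diam(R_{i,j}(h,w))\leq d$. With the half-open convention on the tiles, the extremal corners realizing the supremum live on an excluded face, so the distance between any two points genuinely belonging to $R_{i,j}(h,w)$ is strictly smaller than $d$.

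Second, consider two distinct tiles $R_{i,j}$ and $R_{i',j'}$ of the same color. If $j=j'$ then $|i-i'|$ is a positive multiple of $k+1$, so $|i-i'|\geq k+1$, and the horizontal distance formula
$$d_{\Hyp}\bigl(R_{i,j}(h,w),R_{i',j}(h,w)\bigr) = \arccosh\!\left(1+\frac{(i-i')^2(\cosh(w)-1)}{e^{2h}}\right),$$
evaluated at $|i-i'|=k+1$ and combined with the defining inequality for $k$, yields a distance exceeding $d$. If instead $j\neq j'$, say $j'>j$, then $j'-j\geq N$, and for any $p\in S_j(h)$, $q\in S_{j'}(h)$ the ratio $y_q/y_p > e^{(j'-j-1)h}\geq e^{(N-1)h}\geq e^d$, with the first inequality strict thanks to the half-open upper boundary of $S_j(h)$. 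Therefore $d_{\Hyp}(p,q)\geq \log(y_q/y_p)>d$.

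The main obstacle is essentially bookkeeping: checking that the two arithmetic conditions imposed on $(w,h)$ and on $k$ match precisely the horizontal and vertical distance estimates needed, and handling the delicate equality cases by exploiting the half-open convention on the tiles. Apart from these boundary subtleties, the proof reduces to a direct comparison of the two distance estimates against the two hypotheses.
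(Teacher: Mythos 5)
Your proposal is correct and follows essentially the same route as the paper: the theorem is stated there as a summary of the preceding checkerboard construction (diameter of a tile, distance between tiles in a stratum, and the $\lceil d/h\rceil+1$ strata needed before recycling colors), and your coloring $c(i,j)=(i \bmod (k+1),\, j \bmod N)$ together with the three distance checks is exactly that argument, written out with more care about the half-open boundary cases than the paper itself provides.
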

We note that the above condition implies that $k\geq 2$. 

\begin{figure}[h]
{\color{linkblue}
\leavevmode \SetLabels
\L(0.47*.95) $\geq d$\\
\L(0.08*.35) $\geq d$\\
\endSetLabels
\begin{center}
\AffixLabels{\centerline{\epsfig{file =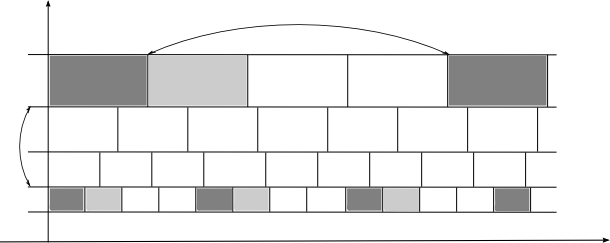,width=12.0cm,angle=0} }}
\vspace{-30pt}
\end{center}
\caption{$k=3$ and $\left\lceil \frac{d}{h} \right\rceil=2$} \label{fig:distancerectangle}
}
\end{figure}

As stated, it is not clear how to optimally apply the theorem. We state a formulation which, although not necessarily practical, will give us the optimal solution for a checkerboard coloring.

Suppose we are given an $h>0$ which satisfies $h<d$. We want to optimize the checkerboard using this fixed $h$. There is now a clear choice of $w$:
$$
w= \min\left\{d, \arccosh\left( \frac{1 + 2 e^h\cosh(d) - e^{2h}}{2}\right)\right\}.
$$
Using this we again have a canonical choice of $k$:
$$
\left\lceil e^h \sqrt{\frac{\cosh(d)-1}{\cosh(w)-1}}\right\rceil.
$$
Everything is now expressed in terms of $h$ and thus we have the following.
\begin{theorem}\label{thm:chromahypupper2}
The $d$-chromatic number of the hyperbolic plane satisfies 
$$
\chi(\Hyp,d) \leq \min_{h<d} \left\{ (k(h) + 1) \left( \left\lceil \frac{d}{h} \right\rceil +1 \right)\right\}
$$
where
$$
w(h):= \min\left\{d, \arccosh\left( \frac{1 + 2 e^h\cosh(d) - e^{2h}}{2}\right)\right\}
$$
and
$$
k(h):= \left\lceil e^h \sqrt{\frac{\cosh(d)-1}{\cosh(w(h))-1}}\right\rceil.
$$
\end{theorem}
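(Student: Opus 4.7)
The plan is to treat Theorem~\ref{thm:chromahypupper2} as an optimization of Theorem~\ref{thm:chromahypupper1}: I fix $h \in (0,d)$, determine the $w$ that minimizes the right-hand side of the bound, read off the associated minimal $k$, and then take the infimum over $h$.

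\textbf{Step 1: Fixing $h$ reduces to minimizing $k$.} For fixed $h<d$, the factor $\lceil d/h\rceil +1$ in the bound of Theorem~\ref{thm:chromahypupper1} is already determined, so I want to pick $w$ so as to make the permissible $k$ as small as possible. Since $k$ is the smallest integer with $k\geq e^h\sqrt{(\cosh(d)-1)/(\cosh(w)-1)}$, the lower bound on $k$ is a decreasing function of $w$, so I should take $w$ as large as the diameter constraint allows.

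\textbf{Step 2: Computing the largest admissible $w$.} The diameter constraint splits into the two inequalities $w\leq d$ and
\[
\arccosh\!\left(1+\tfrac{2(\cosh(w)-1)+(e^h-1)^2}{2e^h}\right)\leq d.
\]
Applying $\cosh$ to the second and solving for $\cosh(w)$, a short algebraic manipulation (using $(e^h-1)^2 = e^{2h}-2e^h+1$) rewrites it as
\[
\cosh(w)\leq \tfrac{1+2e^h\cosh(d)-e^{2h}}{2}.
\]
The largest $w$ satisfying both constraints is therefore exactly
\[
w(h)=\min\!\left\{d,\;\arccosh\!\left(\tfrac{1+2e^h\cosh(d)-e^{2h}}{2}\right)\right\}.
\]
I will check that the argument of $\arccosh$ is $\geq 1$, which reduces to $\cosh(d)\geq\cosh(h)$, and holds precisely because $h<d$; this is where the hypothesis $h<d$ is used.

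\textbf{Step 3: The corresponding value of $k$.} With $w=w(h)$ the smallest permissible integer $k$ is
\[
k(h)=\left\lceil e^h\sqrt{\tfrac{\cosh(d)-1}{\cosh(w(h))-1}}\right\rceil,
\]
which is well defined since $w(h)>0$ by Step~2. Applying Theorem~\ref{thm:chromahypupper1} with these choices of $w$ and $k$ yields $\chi(\Hyp,d)\leq (k(h)+1)\bigl(\lceil d/h\rceil+1\bigr)$ for every $h\in(0,d)$. Taking the minimum over $h<d$ gives the stated bound.

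\textbf{Main obstacle.} The only substantive work is the derivation of the closed form for $w(h)$ from the diameter inequality, together with the verification that the arccosh is well defined when $h<d$. Everything else is a monotonicity argument: $k$ is minimized by maximizing $w$, and the outer factor depends only on $h$, so one is free to optimize each variable in turn rather than jointly. Note that this proves only that this is the best bound achievable by the checkerboard construction of Theorem~\ref{thm:chromahypupper1}; the theorem makes no claim of optimality for $\chi(\Hyp,d)$ itself.
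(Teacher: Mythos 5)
Your proposal is correct and matches the paper's own derivation: the paper also obtains Theorem~\ref{thm:chromahypupper2} by fixing $h<d$, taking $w$ as large as the diameter constraint of Theorem~\ref{thm:chromahypupper1} permits (which after applying $\cosh$ gives exactly the stated $w(h)$), reading off the minimal admissible $k(h)$, and minimizing over $h$. Your added checks — that the $\arccosh$ argument is $\geq 1$ precisely because $h<d$, and that maximizing $w$ is what minimizes $k$ for fixed $h$ — are details the paper leaves implicit, and they are right.
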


We now apply these results to get effective bounds in terms of $d$.

\subsubsection{Bounds on $\chi(\Hyp,d)$ for small $d$}

Note that the above method requires that $k(h), \lceil \frac{d}{h} \rceil >1$. So in particular the method will never allow for a better bound than $9$ on the chromatic number. We now show that this bounds holds for sufficiently small $d$.

\begin{theorem}\label{thm:smalld}
For $d\leq 2 \log(2) \approx 1.389...$ we have
$$\chi(\Hyp,d) \leq 9.$$
\end{theorem}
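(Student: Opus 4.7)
The plan is to invoke Theorem \ref{thm:chromahypupper1} with the choice $h = d/2$. This immediately forces $\lceil d/h \rceil = 2$, so the resulting upper bound $(k+1)(\lceil d/h\rceil +1) = 3(k+1)$ equals $9$ as soon as we exhibit an admissible width $w$ with $k \leq 2$.

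Setting $y := e^{d/2}$ and using the identity $\cosh(d)-1 = (y^2-1)^2/(2y^2)$, the inequality $e^{h}\sqrt{(\cosh(d)-1)/(\cosh(w)-1)} \leq 2$ becomes $\cosh(w) \geq 1 + (y^2-1)^2/8$. The natural move is to saturate this, i.e., take
$$\cosh(w) = 1 + \frac{(e^d-1)^2}{8},$$
giving $k=2$ exactly. One then checks the two admissibility conditions on the rectangle. The first, $w\leq d$, is equivalent to $\cosh(w)\leq \cosh(d)$ and, using the same identity, reduces to $y^2\leq 4$, that is $d\leq 2\log 2$. So the hypothesis of the theorem is precisely the range where this saturating choice is legal.

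The second admissibility condition controls the diagonal of the rectangle. Substituting $h=d/2$ and our $\cosh(w)$ into $\arccosh\bigl(1+(2(\cosh(w)-1)+(e^{d/2}-1)^2)/(2e^{d/2})\bigr) \leq d$ and cancelling a factor of $(y-1)^2$ that appears on both sides (the inequality is an equality at $y=1$), the condition boils down to the cubic inequality $y^3 - 2y^2 - 3y - 4 \leq 0$. The relevant root of this cubic sits near $y\approx 3.28$, and at $y=2$ the left-hand side evaluates to $-10$, so the inequality is comfortably satisfied throughout our range $y \in [1,2]$.

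The main obstacle is the algebraic reduction in the diagonal check, but it is mild: the identity $\cosh(d)-1 = (y^2-1)^2/(2y^2)$ both motivates the saturating choice of $w$ (it makes $k=2$ transparent) and is the reason the diagonal inequality collapses to a clean cubic after one cancellation. With these ingredients assembled, both required inequalities hold throughout $d \leq 2\log 2$, and Theorem \ref{thm:chromahypupper1} delivers $\chi(\Hyp,d) \leq 9$.
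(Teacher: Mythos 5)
Your proposal is correct, and it rests on the same engine as the paper's proof: the checkerboard bound with $h=d/2$, forcing $\lceil d/h\rceil =2$, and exhibiting a width $w$ for which $k=2$ is admissible, yielding $3\cdot 3=9$. The difference is in which constraint you saturate when choosing $w$. The paper (via Theorem \ref{thm:chromahypupper2}) saturates the \emph{diameter} constraint, taking $w=\min\{d,\arccosh(\frac{1+2e^{h}\cosh(d)-e^{2h}}{2})\}$, which forces a case split at the crossover value $d_0\approx 0.56$ where the binding corner pair of the rectangle switches from the diagonal to the base; it then checks $k=2$ separately on each subinterval. You instead saturate the \emph{color-count} constraint, setting $\cosh(w)=1+(e^{d}-1)^2/8$ so that $k=2$ holds by construction, and then verify both halves of the diameter condition uniformly: $w\le d$ reduces (after cancelling $(y^2-1)^2$ with $y=e^{d/2}$) exactly to $y\le 2$, i.e.\ $d\le 2\log 2$, and the diagonal condition reduces, after cancelling $(y-1)^2$, to $y^3-2y^2-3y-4\le 0$, which holds on $[1,2]$ since the unique positive root is near $3.28$. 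I checked the algebra of both reductions and it is correct. Your route avoids the case analysis and the computation of $d_0$ entirely, at the cost of the (mild) cubic verification; it also makes transparent why $2\log 2$ is the exact endpoint of the method, since that is precisely where $w\le d$ fails for the saturating choice.
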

\begin{proof}
We'll apply the strategy from Theorem \ref{thm:chromahypupper2}. If we want to bound $\chi(\Hyp,d)$ by $9$, we need to have $\frac{d}{h}  \leq  2$.
With this constraint in hand, we set $h = \frac{d}{2}$ as any larger $h$ can only increase $k$ and the diameter of a rectangle. 

We'll need to set $w(h)$ as in Theorem \ref{thm:chromahypupper2} and this depends on $d$. To determine our choice, we'll need to study the function
$$
\min\left\{d, \arccosh\left( \frac{1 + 2 e^h\cosh(d) - e^{2h}}{2}\right)\right\}
$$
for $h =\frac{d}{2}$.

A straightforward analysis tells us to set 
$$
w(h) = \arccosh\left( \frac{1 + 2 e^h\cosh(d) - e^{2h}}{2}\right)
$$ 
for $d\in ]0,d_0]$,
where $d_0$ is the non zero positive solution to the equation
$$
\frac{1+ 2 e^{\frac{d_0}{2}} \cosh(d_0)}{2}  - e^{d_0} - \cosh(d_0)=0.
$$
The precise value for $d_0$ can be computed:
$$
d_0 = 2\log\left( \frac{(108 + 12 \sqrt{69})^\frac{1}{3} + \frac{12}{(108 + 12 \sqrt{69})^\frac{1}{3}}}{6}\right)\approx 0.56...
$$
For $d$ in this interval we can take $k =2$ as the following inequality is satisfied
\begin{eqnarray*}
4 &>& e^{2h} \frac{\cosh(d)-1}{\cosh(w(h))-1}\\
& = & 2 \, e^d \frac{\cosh(d)-1}{- e^d + 2 e^{\frac{d}{2}} \cosh(d) +1}.
\end{eqnarray*}
For $d > d_0$ we are required to set $w(d)=d$. In order to be able to set $k =2$ we need to satisfy:
$$
2 \geq e^{\frac{d}{2}}
$$
which is true provided
$$
d\leq 2 \log(2)
$$
as desired. 

\end{proof}

Even though we had the previous theorems in hand, the above argument still required a case by case analysis, which can be explained geometrically. The diameter of the rectangle for small $d$ was realized by diagonally opposite points, but for larger $d$ it was realized by the base points. 

We can argue similarly to obtain the following results, which again require a case by case analysis. Note that we needed to argue case by case in terms of $k$ and $\lceil \frac{d}{h} \rceil$ so we only include the upper bounds that work for larger intervals of $d$. The strategy is always the same: we want to bound $\chi(\Hyp,d)$ by $N = (k+1)(m+1)$, so we set $h = \frac{d}{m}$ and we argue as above. As we've treated (very) small $d$ already, the diameter of the rectangle will be generally be the base of the rectangle. This will work for all $d$ that satisfy
$$
d \leq m \log(k).
$$
Now if $N = (a+1)(b+1)$, to get a larger interval will require comparing $a \log(b)$ and $b \log(a)$. 

\begin{proposition}\label{prop:effectivesmall} The chromatic numbers of the hyperbolic plane satisfy the following inequalities for certain $d$:
\begin{itemize}
\item For $d \leq 2 \log(3)$:
$$\chi(\Hyp,d) \leq 12.$$
\item For $d \leq 2 \log(4)$:
$$\chi(\Hyp,d) \leq 15.$$
\item For $d \leq 3 \log(3)$:
$$\chi(\Hyp,d) \leq 16.$$
\item For $d \leq 5 \log(2)$:
$$\chi(\Hyp,d) \leq 18.$$
\end{itemize}
\end{proposition}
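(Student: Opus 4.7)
The plan is to apply Theorem \ref{thm:chromahypupper2} four times, once for each claimed inequality, with a different pair $(k,m)$ of small integers playing the role of the number of colors per strata and the number of strata before repetition. In every case I set $h=d/m$, so that $\left\lceil d/h\right\rceil = m$, and choose $w$ as prescribed by the theorem.

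The first step is to argue that, in the regime of interest, one may simply take $w=d$ (that is, the base of the rectangle realizes its diameter). Unwinding
$$
w(h)=\min\Bigl\{d,\;\arccosh\!\bigl((1+2e^{h}\cosh(d)-e^{2h})/2\bigr)\Bigr\},
$$
a direct manipulation shows that $w(h)=d$ exactly when $2\cosh(d)\geq 1+e^{h}$. With $h=d/m$ and $m\geq 2$, writing $u=e^{d/2}$ reduces this to $u^{4}-u^{3}-u^{2}+1\geq 0$, which is easily checked to hold for all $d\geq 2\log(2)$. Since Theorem \ref{thm:smalld} already gives $\chi(\Hyp,d)\leq 9$ for $d\leq 2\log(2)$, and $9$ is smaller than each of $12,15,16,18$, the remaining range to treat is exactly $2\log(2)\leq d\leq d_{\max}$, where the base regime is valid.

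Now, with $w=d$, the inequality $k\geq e^{h}\sqrt{(\cosh(d)-1)/(\cosh(w)-1)}$ reduces to $k\geq e^{d/m}$, i.e.\ $d\leq m\log(k)$. Thus for any integers $k,m\geq 2$, Theorem \ref{thm:chromahypupper2} gives the general implication
$$
d\leq m\log(k)\ \Longrightarrow\ \chi(\Hyp,d)\leq (k+1)(m+1).
$$
Each of the four claims then follows by choosing $(k,m)$ so that $(k+1)(m+1)$ equals the desired bound and $m\log(k)$ matches the claimed endpoint: $(k,m)=(3,2)$ yields $\chi(\Hyp,d)\leq 12$ for $d\leq 2\log(3)$; $(k,m)=(4,2)$ (equivalently $(2,4)$) yields $\chi(\Hyp,d)\leq 15$ for $d\leq 2\log(4)$; $(k,m)=(3,3)$ yields $\chi(\Hyp,d)\leq 16$ for $d\leq 3\log(3)$; and $(k,m)=(2,5)$ yields $\chi(\Hyp,d)\leq 18$ for $d\leq 5\log(2)$.

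The only real obstacle is bookkeeping. For each target $N=(k+1)(m+1)$ one should check that the listed pair indeed maximises $m\log(k)$ over all factorisations of $N$ of this form; this is what determines the widest interval in $d$ on which the bound can be advertised. For instance, for $N=18$ the other option $(k,m)=(5,2)$ would only give $d\leq 2\log(5)<5\log(2)$, so $(2,5)$ is strictly better. Once these finite comparisons are made, the four bounds drop out of Theorem \ref{thm:chromahypupper2} together with Theorem \ref{thm:smalld} for the small-$d$ tail.
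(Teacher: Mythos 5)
Your proposal is correct and follows essentially the same route as the paper: set $h=d/m$, observe that for $d\geq 2\log(2)$ the base of the rectangle realizes the diameter so that $w=d$, reduce the constraint on $k$ to $d\leq m\log(k)$, and compare factorizations $N=(k+1)(m+1)$ to select the widest interval. In fact you supply a detail the paper only asserts, namely the explicit verification (via $2\cosh(d)\geq 1+e^{d/m}$, equivalently $u^{4}-u^{3}-u^{2}+1=(u-1)(u^{3}-u-1)\geq 0$ for $u=e^{d/2}\geq 2$) that the base regime is valid throughout the range considered.
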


The process can be continued to obtain optimal intervals where $\chi(\Hyp,d)$ is bounded by integers of the form $N = (a+1)(b+1)$ where both $a$ and $b$ are greater or equal to $2$.

We now turn our attention to large values of $d$.

\subsubsection{Bounds on $\chi(\Hyp,d)$ for large $d$}

For large values of $d$ we set $w:=d$ and $h:= \log(k)$. Provided $d$ is large enough, our bounds tell us that 
$$
\chi(\Hyp,d) \leq (k+1) \left( \left\lceil \frac{d}{\log(k)} \right\rceil +1\right).
$$
We want to optimize the asymptotic growth of this bound in terms of $d$. The relevant factor is 
$$
\frac{k+1}{\log(k)}
$$
which is minimized for $k = 4$. Note that the above bound, for $k=4$ will hold, by {Theorem \ref{thm:chromahypupper1}}, provided 
$$
d \geq  \arccosh\left( \frac{1 + 2 e^h\cosh(d) - e^{2h}}{2}\right)
$$
which is certainly true for all $d \geq 2$ (a more precise value is true but we've proved better bounds above). We have thus proved the following. 

\begin{theorem}\label{thm:larged}
For $d \geq 2 $ we have 
$$
\chi(\Hyp,d) \leq 5 \left( \left\lceil \frac{d}{\log(4)} \right\rceil +1\right).
$$
\end{theorem}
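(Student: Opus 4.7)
The plan is to invoke Theorem \ref{thm:chromahypupper1} with parameter choices that optimize the asymptotic slope of the bound in $d$. First I would set $w := d$. This automatically satisfies $w \leq d$ and, crucially, collapses the condition on $k$: the ratio $\sqrt{(\cosh(d)-1)/(\cosh(w)-1)}$ equals $1$, so one needs only $k \geq e^h$. I would then set $h := \log(k)$ so that this becomes an equality and the smallest admissible integer is $k$ itself. With these choices the bound from Theorem \ref{thm:chromahypupper1} reads
$$\chi(\Hyp, d) \leq (k+1) \left( \left\lceil \tfrac{d}{\log(k)} \right\rceil + 1 \right),$$
provided the remaining half of the diameter condition holds.

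The second step is to verify that remaining condition, namely
$$\arccosh\!\left( 1 + \tfrac{2(\cosh(d)-1) + (k-1)^2}{2k}\right) \leq d.$$
Multiplying out and using the identity $(k-1)^2 = (k-1)(k-1)$, this simplifies, after dividing by the positive factor $(k-1)$, to the clean inequality $\cosh(d) \geq (k+1)/2$. For $k = 4$ this reads $\cosh(d) \geq 5/2$, i.e.\ $d \geq \arccosh(5/2) \approx 1.567$, which is comfortably implied by the hypothesis $d \geq 2$.

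The third step is to choose the best $k$. Since only the leading coefficient $(k+1)/\log(k)$ in front of $d$ matters asymptotically, I would minimize this ratio. A one-variable calculus check on the real function $f(x) = (x+1)/\log(x)$ for $x > 1$ shows it has a unique interior minimum, and comparing $f(2) \approx 4.33$, $f(3) \approx 3.64$, $f(4) \approx 3.61$, $f(5) \approx 3.73$ confirms that the optimal integer is $k = 4$. Plugging $k = 4$ into the displayed bound gives exactly $5(\lceil d/\log(4) \rceil + 1)$, which is the claimed inequality.

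The only substantive obstacle is the algebraic verification that the arccosh expression simplifies to the tractable inequality $\cosh(d) \geq (k+1)/2$; everything else is either a direct application of Theorem \ref{thm:chromahypupper1} or a routine optimization over a single integer parameter. I expect no further complications because the threshold $\arccosh(5/2)$ is strictly less than the stated hypothesis $d \geq 2$, leaving room to spare.
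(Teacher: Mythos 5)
Your proof is correct and follows essentially the same route as the paper: take $w=d$ and $h=\log(k)$ in Theorem \ref{thm:chromahypupper1}, minimize $(k+1)/\log(k)$ over integers to obtain $k=4$, and verify the diameter condition. Your explicit reduction of that condition to $\cosh(d)\geq (k+1)/2$, hence $d\geq\arccosh(5/2)\approx 1.567<2$, is in fact more careful than the paper's one-line assertion (whose displayed inequality even has its direction reversed as printed).
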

\begin{remark}
We end this analysis by observing that the same argument tells us that 
$$
\chi(\Hyp,d) \leq 4 \left( \left\lceil \frac{d}{\log(3)} \right\rceil +1\right)
$$
for $d \geq 2$. Although this bound is not asymptotically as good as the one in the theorem above, for certain $d$ up until approximately $143$, it provides a stronger estimate. This illustrates the touch and go aspect of the checkerboard method.
\end{remark}

\subsubsection{Using a fundamental domain}

In this section, we briefly remark that there are certain $d$ for which we can bound $\chi(\Hyp,d)$ by $8$. The method is really a hyperbolic analogue of the classical $7$ upper bound on the chromatic number of the Euclidean plane. We provide it to illustrate the current lack of a monotonic method: one might expect that 
$$
\chi(\Hyp,d) \leq \chi(\Hyp,d')
$$
provided $d'< d$ but it seems like a tricky question. 

The coloring is based on tilings that appear when studying Klein's quartic in genus $3$. We'll describe it in simple terms, and show how it's an adaptation of the $7$ upper bound for the Euclidean plane. 

One way of describing the classical Euclidean coloring (for $d=1$) is as follows. Take a tiling of $\R^2$ by a set of regular hexagons of diameter $<1$ (say $0.99$). Now consider the dual graph to this tiling. We fix a base tile and associate to all of its points color $1$.  We color each of the adjacent hexagons colors $2$ to $7$. We now describe how to color all remaining hexagons. From a vertex $u$ of the dual graph, we travel along any edge and then travel along the unique edge at oriented angle $\frac{2\pi}{3}$ to reach a new vertex $v$. From $v$ we then travel along the unique edge at oriented angle $-\frac{2\pi}{3}$ to reach a new vertex $w$ and we color $w$ the same color as $u$. A standard argument tells us that we've colored the entire plane like this.

We adapt this method as follows: we take a regular hyperbolic heptagon $H$ with all angles equal to $\frac{2\pi}{3}$. There is a unique such heptagon and it can be decomposed into $7$ triangles $T$ of angles $\frac{\pi}{3}, \frac{\pi}{3}$ and $\frac{2\pi}{7}$. The diameter of $H$ can be computed using standard hyperbolic trigonometry and it has a value of slightly more than $1.22$. 

We now consider a standard tiling of $\Hyp$ by copies of $H$. Fixing a base copy, we color all points of $H$ the same color. Each of the $7$ surrounding heptagons are given a different color. And we've colored a shape $O$ consisting of $8$ copies of $H$. To describe how to color all other heptagons, we argue using the dual graph. Here the edges of the dual graph meet at angles multiples of $\frac{2\pi}{7}$. From a vertex $u$ of the dual graph, we travel along any edge and then travel along the unique edge at oriented angle $\frac{4\pi}{7}$ to reach a new vertex $v$. From $v$ we then travel along the unique edge at oriented angle $-\frac{4\pi}{7}$ to reach a new vertex $w$ and we color $w$ the same color as $u$. As above, this colors the entire hyperbolic plane. 

Of course this won't work for all $d$. We choose $d\geq 1.22$ to ensure that its bigger than the diameter of the heptagons but we also need to choose $d$ small enough so that translates of the same color are further than $d$. Using standard hyperbolic trigonometry, one can see that any two heptagons are at distance at least $\approx 1.77$. The result of all of this is the following proposition. 

\begin{proposition}\label{prop:funddom}
For $d\in [1.22, 1.77]$ we have
$$
\chi(\Hyp,d) \leq 8.
$$
\end{proposition}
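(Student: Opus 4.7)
The plan is to formalize the construction sketched before the statement. First I would introduce the regular hyperbolic heptagon $H$ with all angles equal to $\frac{2\pi}{3}$; its existence and uniqueness follow from the Gauss--Bonnet formula together with the monotonicity of angle as a function of side length. Decomposing $H$ into seven isoceles triangles meeting at the center, each with angles $\frac{\pi}{3},\frac{\pi}{3},\frac{2\pi}{7}$, standard hyperbolic trigonometry (the second law of cosines) gives both the circumradius and the inradius of $H$, from which I read off $\diam(H)$. A direct numerical verification then shows $\diam(H)<1.22$, which secures the lower endpoint of the interval: any two points in the same copy of $H$ are at distance strictly less than $d$, so monochromatic heptagons never contain a pair of points at distance exactly $d$.

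Next I would set up the tiling of $\Hyp$ by translates of $H$ under the group $\Gamma$ generated by reflections across the sides of $H$, which exists precisely because the angle $\frac{2\pi}{3}$ divides $2\pi$. Let $\G$ be the dual graph of the tiling, so that vertices correspond to copies of $H$ and each vertex has valence $7$; the seven edges at a vertex are naturally cyclically ordered by the boundary orientation of the heptagon. I would then define the coloring $c:V(\G)\to\{1,\dots,8\}$ by assigning color $1$ to a fixed base vertex $u_0$, colors $2,\dots,8$ to its seven neighbors (in any fixed cyclic order), and extending by the propagation rule from the sketch: whenever two vertices $u,w$ of $\G$ are connected by a length-two path $u\to v\to w$ such that the outgoing edge at $v$ makes oriented angle $-\frac{4\pi}{7}$ with the incoming one, we set $c(w)=c(u)$.

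The main obstacle is showing that this rule consistently extends the partial coloring of $\{u_0\}$ together with its seven neighbors to all of $V(\G)$. I would proceed as follows. The $\Gamma$-action on $\Hyp$ has a rotation subgroup $\Gamma^+$ of index $2$, and the propagation rule amounts to saying that $c$ is invariant under the normal subgroup $N\subset\Gamma^+$ generated by products of the form $\rho_v^{4/7}\rho_v^{-4/7}$, where $\rho_v$ denotes the rotation by $\frac{2\pi}{7}$ around a vertex of $H$ shared with neighboring heptagons. Identifying $\Gamma^+/N$ with a finite group of order $8$ (this is the key computation, and it matches the symmetry group of Klein's quartic surface, a genus-$3$ quotient on which the same tiling descends to a single heptagon surrounded by copies of itself in a fixed rotational pattern) shows the coloring is well defined and uses exactly $8$ colors, with each color class being a $\Gamma$-orbit of heptagons.

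Finally I would verify the upper endpoint of the interval. For two heptagons $H_1,H_2$ assigned the same color, their centers lie in the same $N$-orbit, so they are separated by a translation in $\Gamma^+$ whose axis and translation length can be computed from the generators of $N$; the minimizing case is realized by heptagons two steps apart in the dual graph along a path making an angle $\frac{4\pi}{7}$ at the intermediate vertex. Applying the hyperbolic law of cosines to the triangle formed by the two centers and the shared vertex of the intermediate heptagon, then subtracting twice the inradius of $H$ to pass from centers to closest points of the heptagons, yields $d_{\Hyp}(H_1,H_2)\gtrsim 1.77$. Since any two points at distance $d\leq 1.77$ lying in same-colored heptagons are therefore not at distance exactly $d$, the coloring is admissible for every $d\in[1.22,1.77]$, proving the proposition.
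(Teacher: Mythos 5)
Your strategy coincides with the paper's: tile $\Hyp$ by the regular heptagon $H$ with angles $\frac{2\pi}{3}$, propagate $8$ colors through the dual graph of the $\{7,3\}$ tiling, and verify $\diam(H)<1.22$ together with a separation of roughly $1.77$ between same-colored tiles. However, several of the steps you supply are wrong as written. The propagation rule should identify the colors of the endpoints of a length-\emph{three} dual path $u\to x\to v\to w$ whose turns at $x$ and at $v$ are $+\frac{4\pi}{7}$ and $-\frac{4\pi}{7}$ (the exact analog of the Euclidean $7$-coloring by hexagons). Your length-two rule with a single turn of $-\frac{4\pi}{7}$ is already inconsistent with the initial data: taking $v=u_0$ and $u=n_1$ a neighbor of $u_0$, the edge at oriented angle $-\frac{4\pi}{7}$ from the edge $u_0n_1$ leads to another neighbor of $u_0$, which your rule then forces to share the color of $n_1$, contradicting the assignment of seven distinct colors around $u_0$. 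Moreover, the consistency of the extension --- the one genuinely nontrivial point, which the paper itself also only asserts --- is not established by your group-theoretic sketch: the generator $\rho_v^{4/7}\rho_v^{-4/7}$ is the identity as written, and the appeal to Klein's quartic is off. That surface carries $24$ heptagons (a $\{7,3\}$ tiling of a surface of Euler characteristic $\chi$ has $F=-6\chi$ faces, so $F=24$ in genus $3$, and no surface quotient can have $8$ faces since $\chi=-\frac{4}{3}\notin\Z$), its automorphism group has order $168$, and the $8$ color classes correspond to the $8$ Sylow $7$-subgroups of $\PSL(2,7)$, i.e.\ to cosets of an index-$8$ subgroup, not to a quotient group of order $8$.

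Two further quantitative steps do not go through as described. For the separation bound, the correct inequality is $d_{\Hyp}(H_1,H_2)\geq d_{\Hyp}(c_1,c_2)-2R$ with $R$ the \emph{circumradius}; subtracting twice the inradius $r<R$ does not yield a lower bound, since points of a heptagon may lie at distance $R$ from its center. For the diameter, $\diam(H)$ is the distance between two vertices three steps apart along the boundary (about $1.218$), which is neither $2R\approx 1.241$ nor $R+r\approx 1.164$; "reading it off from the circumradius and the inradius" invites using one of the latter, and $2R>1.22$ would sink the lower endpoint of the interval. (The conclusion $\diam(H)<1.22$ is correct, but only with the right formula.) A smaller point: three heptagons around a vertex do not form a reflection group with $H$ as fundamental domain, since $\frac{2\pi}{3}$ is not of the form $\frac{\pi}{k}$; the tiling should be produced from the $(2,3,7)$ triangle group. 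In short: same route as the paper, but the propagation rule, the consistency argument, and the separation estimate each need to be repaired before this is a proof.
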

\subsection{Bounds for $q$-trees}

Recall that $\chi(T_q, d)$ is the minimum number of colors required to color a $q$-regular tree such that any two vertices at distance $d$ apart are of a different color. A first immediate bound on this quantity is given by Brooks' theorem. Consider the distance $d$ graph associated to $T_q$: it is a regular graph of degree $q (q-1)^{d-1}$ so 
$$
\chi(T_q, d) \leq q (q-1)^{d-1}+1.
$$
We want to do much better and to do so we emulate the method for $\Hyp$ which required coloring strata. We begin by using a horocyclic decomposition of a tree.

\subsubsection{Strata for horocyclic decompositions}

We describe the method which works identically for any $q$-regular tree $T_q$. 

We begin by choosing a base point $x_0 \in T_q$ and choosing an infinite geodesic ray leaving from this point $[x_0,x_1,\cdots]$ (where $d_{T_q}(x_k, x_{k+1}) = 1$). We think of $\eta = [x_0,x_1,\cdots]$ as a {\it boundary point} of $T_q$ (formally a boundary point is an equivalence class of rays but we won't dwell on that here). 

We define the Busemann function associated to $\eta = [x_0,x_1,\cdots]$ as 
$$
h_\eta(x):= \lim_{y\to \eta} \left(d_{T_q}(y,x)- d_{T_q}(y,x_0) \right) \left(=  \lim_{k\to \infty} \left(d_{T_q}(x_k,x)- d_{T_q}(x_k,x_0) \right)\right).
$$

We can now define the strata $S_n$ as being level sets of the function $h_\eta$:
$$
S_n:= \{ x\in T_q \mid h_\eta(x) = n \}, \, n\in \Z.
$$
We note that the strata are, by analogy with the hyperbolic plane, generally called horocyles and can be thought of as circles centered around a point at infinity. Note that $x_0 \in S_0$ but $x_{k} \in S_{-k}$ for all $k\in \N$ (see Figure \ref{fig:horocyclic construction}). 

\begin{figure}[h]
{\color{linkblue}
\leavevmode \SetLabels
\L(0.1*.93) $S_{-1}$\\
\L(0.1*.72) $S_0$\\
\L(0.1*.5) $S_1$\\
\L(0.1*.28) $S_2$\\
\L(0.1*.05) $S_3$\\
\L(0.52*.96) $x_1$\\
\L(0.52*.74) $x_0$\\
\endSetLabels
\begin{center}
\AffixLabels{\centerline{\epsfig{file =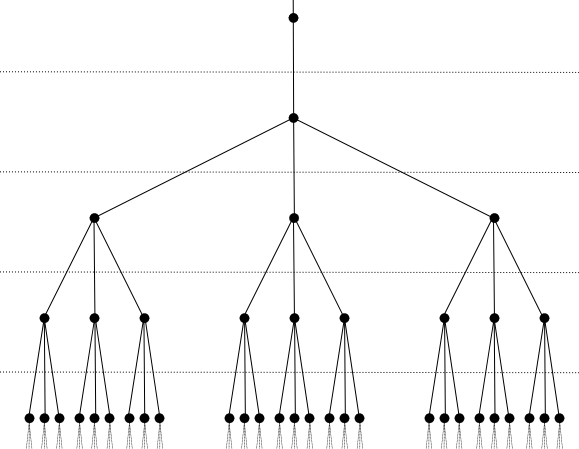,width=10.0cm,angle=0} }}
\vspace{-30pt}
\end{center}
\caption{Horocyclic construction} \label{fig:horocyclic construction}
}
\end{figure}

A first observation is that distances between points in the same stratum are always even. More generally, distances are even between points that lie respectively in $S_k$ and $S_{k'}$ with $k$ and $k'$ of same parity. Thus as an immediate corollary of the horocyclic construction we obtain the following.

\begin{corollary}
If $d$ is odd then $\chi(T_q,d) = 2$.
\end{corollary}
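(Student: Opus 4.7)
The plan is to promote the parity observation just stated into an explicit $2$-coloring via the horocyclic stratification, and to note that two colors are clearly necessary.

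Define $c: T_q \to \{0,1\}$ by $c(x) = h_\eta(x) \bmod 2$, i.e.\ color $x$ by the parity of the unique $n\in\Z$ with $x\in S_n$. The content of the proof is to verify that any two vertices $x,y$ in strata of the same parity satisfy $d_{T_q}(x,y)\in 2\Z$, so that no monochromatic pair can be at odd distance $d$.

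To see the parity claim, I would use the branch point of the two geodesic rays from $x$ and $y$ toward $\eta$. Let $z$ be the first common vertex of the rays $[x,\eta)$ and $[y,\eta)$; such a $z$ exists and is unique since $T_q$ is a tree and $\eta$ is a boundary point. Then $z$ lies on the ray from $x$ to $\eta$ and on the ray from $y$ to $\eta$, so $h_\eta(z) \leq \min\{h_\eta(x), h_\eta(y)\}$ and
$$d_{T_q}(x,z) = h_\eta(x) - h_\eta(z), \qquad d_{T_q}(y,z) = h_\eta(y) - h_\eta(z).$$
Since $z$ separates $x$ from $y$ in the tree, the unique geodesic from $x$ to $y$ passes through $z$, hence
$$d_{T_q}(x,y) = d_{T_q}(x,z) + d_{T_q}(z,y) = h_\eta(x) + h_\eta(y) - 2\,h_\eta(z).$$
The parity of $d_{T_q}(x,y)$ therefore agrees with that of $h_\eta(x)+h_\eta(y)$. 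Consequently if $c(x)=c(y)$, then $h_\eta(x)+h_\eta(y)$ is even, so $d_{T_q}(x,y)$ is even, so in particular $d_{T_q}(x,y)\neq d$ for any odd $d$. This shows $\chi(T_q,d)\leq 2$.

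For the matching lower bound it suffices to produce a single pair of vertices at distance $d$; taking $d$ consecutive edges of a geodesic ray in $T_q$ yields such a pair, so $\chi(T_q,d)\geq 2$. The only mild subtlety is the geometric fact that the two rays toward a common boundary point eventually merge, which is really just the defining feature of a tree and so presents no real obstacle; the rest is a one-line parity check.
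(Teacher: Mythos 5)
Your proof is correct and follows the same route as the paper: color the strata $S_n$ by the parity of $n$ and use the fact that vertices in strata of the same parity are at even distance. You simply make explicit the parity computation via the branch point toward $\eta$, which the paper states as an observation without proof.
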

\begin{proof}
Clearly $\chi(T_q,d)\geq 2$ and we can color $T_q$ using one color for all points lying in $S_k$ with $k$ even and another for all points lying in $k$ odd.
\end{proof}

When $d$ is even, the problem is not so obvious. 

\subsubsection{Bounds for even $d$}

We now prove upper bounds for even $d$.

\begin{theorem}\label{thm:puretree}
When $d$ is even $\chi(T_q,d) \leq (q-1) (d+1)$.
\end{theorem}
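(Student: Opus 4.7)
The plan is to mimic the hyperbolic checkerboard: cycle through $d+1$ levels across the strata, and within each stratum use a palette of $q-1$ colors tailored to resolve its internal distance-$d$ conflicts. Write $d = 2m$ with $m \geq 1$. The product palette then has size $(d+1)(q-1)$, so it suffices to exhibit such a scheme and check it gives a valid $d$-coloring.

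For the level assignment, set $\ell(n) := n \bmod (d+1)$. The key observation is the following. For any $x \in S_n$ and $y \in S_{n'}$ with $d_{T_q}(x,y) = d$, let $v$ denote the highest vertex on the geodesic $[x,y]$ and put $k_x := d_{T_q}(x,v)$, $k_y := d_{T_q}(y,v)$; then $k_x + k_y = d$ and $n - k_x = n' - k_y$, whence $|n - n'| \leq d$ and $n \equiv n' \pmod 2$. If moreover $\ell(n) = \ell(n')$, then $(d+1) \mid (n - n')$; since $d + 1 = 2m + 1$ is odd and hence coprime to $2$, the two divisibilities combine to give $2(d+1) \mid (n - n')$, forcing $n = n'$ in view of $|n - n'| \leq d < 2(d+1)$. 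Thus level equality together with a distance-$d$ conflict forces both points to lie in a single stratum.

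For the within-stratum color, fix once and for all, for every vertex of $T_q$, a labeling of its $q-1$ downward neighbors (those in $S_{n+1}$ when the vertex lies in $S_n$) by $\{1, \ldots, q-1\}$. For $x \in S_n$ let $\pi_j(x)$ denote its unique ancestor in $S_{n-j}$, with $\pi_0(x) = x$; define $c(x) \in \{1, \ldots, q-1\}$ to be the label of $\pi_{m-1}(x)$ as a child of $\pi_m(x)$. If $x, y \in S_n$ satisfy $d_{T_q}(x,y) = d$, the geodesic's highest vertex must be $\pi_m(x) = \pi_m(y) \in S_{n-m}$, and its two distinct children $\pi_{m-1}(x) \neq \pi_{m-1}(y)$ necessarily carry different labels. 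Combining, the map $x \mapsto (\ell(n), c(x))$ is the desired $d$-coloring.

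The only genuinely substantive ingredients are the coprimality of $2$ and $d + 1$, which prevents same-level cross-stratum conflicts, and the observation that the distance-$d$ graph restricted to $S_n$ is a disjoint union of complete $(q-1)$-partite graphs (one for each vertex of $S_{n-m}$, with parts given by the $q-1$ depth-$(m-1)$ subtrees hanging off its children) and therefore has chromatic number exactly $q-1$; the rest is bookkeeping.
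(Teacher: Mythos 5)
Your proof is correct and is essentially the paper's argument: color stratum by stratum, reusing colors for strata congruent mod $d+1$, and within a stratum use $q-1$ colors indexed by which child-subtree of the depth-$\frac{d}{2}$ ancestor a vertex lies in (your $c(x)$ is exactly the paper's ``bundle within a super bundle''). The only difference is presentational --- you spell out the cross-stratum verification that $|n-n'|\leq d$ (with a redundant parity detour, since $(d+1)\mid(n-n')$ and $|n-n'|\leq d$ already force $n=n'$) which the paper merely asserts.
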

\begin{proof}
We color one stratum at a time and by thinking of the tree as a rooted tree with root at infinity, we bundle vertices on a stratum in terms of their ``ancestors". 

More precisely we'll color all vertices of $S_k$ the same color if they have a common root at distance $\frac{d-2}{2}$. Note that this is possible because any two such vertices are at distance at most $d-1$. 

For a given monochromatic bundle $B$, we now consider all of the other bundles of $S_k$ that have a common ancestor at distance $\frac{d}{2}$. Note there are exactly $q-1$ of these in total (which we'll call a super bundle) and we'll color each bundle a different color requiring $q-1$ colors. We can color all other vertices of $S_k$ with the same $q-1$ colors using the same method as any two vertices lying in different super bundles are distance $>d$ apart. 

Now any two stata $S_k$ and $S_{k'}$ can be colored using the same colors provided $|k-k'|\geq d+1$ so we obtain a coloring with $(q-1) (d+1)$ as required.
\end{proof}

\subsubsection{Lower bounds}

\begin{proposition}\label{prop:cliqueq}
For any even $d\geq 2$, the clique number satisfies $\Omega(T_q,d) = q$.
\end{proposition}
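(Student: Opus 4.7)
The plan is to prove both directions: the lower bound $\Omega(T_q,d) \geq q$ by exhibiting an explicit configuration, and the upper bound $\Omega(T_q,d) \leq q$ via a median argument exploiting the tree structure.

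For the lower bound, write $d = 2m$. I would fix a vertex $v \in T_q$ with neighbors $v_1, \ldots, v_q$, and for each $i$ choose a vertex $w_i$ at distance $m-1$ from $v_i$ along any ray leaving $v_i$ in a direction away from $v$. Then $d_{T_q}(w_i, v) = m$, and because the unique path between $w_i$ and $w_j$ (for $i \neq j$) must pass through $v$, we get $d_{T_q}(w_i, w_j) = 2m = d$. This yields $q$ points pairwise at distance $d$.

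For the upper bound, the key ingredient is the median (or tripod) property of trees: any three vertices $a,b,c$ admit a unique point $m(a,b,c)$ at which the three geodesics joining them in pairs meet, and $d(a,b) = d(a,m(a,b,c)) + d(m(a,b,c),b)$, etc. I would first argue that if $w_1,w_2,w_3$ are pairwise at distance $2m$, then adding the three tripod equations gives $d(w_i, m(w_1,w_2,w_3)) = m$ for each $i$. Next, I would show that this median is common to the whole clique: if a fourth vertex $w_4$ also lies at distance $2m$ from $w_1$ and $w_2$, then its median with $w_1,w_2$ is again a point on the path from $w_1$ to $w_2$ at distance $m$ from each endpoint, and such a point is unique on a path of length $2m$, so it coincides with $m(w_1,w_2,w_3)$. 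By induction, every vertex in the clique lies at distance exactly $m$ from a single common center $x$.

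The conclusion then follows by a pigeonhole argument at $x$: each $w_i$ determines a geodesic of length $m$ starting at $x$, and the initial edge of that geodesic uses one of the $q$ edges incident to $x$. If two distinct $w_i, w_j$ used the same initial edge, their geodesics to $x$ would share at least that edge, forcing $d_{T_q}(w_i, w_j) \leq 2m - 2 < d$. So the assignment $w_i \mapsto$ (initial edge at $x$) is injective, giving at most $q$ elements in the clique. The main obstacle is establishing the uniqueness of the common center $x$; once that is in place, everything reduces to counting edges at $x$.
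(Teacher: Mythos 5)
Your proof is correct. The lower bound is identical to the paper's: one vertex in each of the $q$ branches at distance $d/2$ from a chosen center. For the upper bound, however, you take a genuinely different (and arguably cleaner) route. The paper argues by contradiction on the combinatorics of the Steiner tree spanned by a hypothetical clique of size $c>q$: since the maximal degree is $q$, that subtree would need at least two branching points $v,w$, and the distance formula $d(v_1,w_1)=d(v_1,v)+d(v,w)+d(w_1,w)$ then shows that some triple cannot be pairwise equidistant. You instead construct the common center directly via the median (tripod) property: the tripod equations force every clique vertex to lie at distance $d/2$ from the midpoint $x$ of one fixed pair, and distinct clique vertices must leave $x$ along distinct edges, else their distance drops to at most $d-2$; pigeonhole on the $q$ edges at $x$ finishes the argument. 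Both proofs ultimately rest on the same structural fact --- an equidistant set in a tree is a ``spider'' with a single branch point --- but yours derives it constructively and quantitatively (every clique vertex is at distance exactly $d/2$ from $x$), while the paper's is a pure contradiction argument. Your version has the small advantage of reusing the same geodesic bookkeeping that drives the paper's later tree arguments (e.g.\ the bundles in Theorem \ref{thm:puretree}), and it handles the degenerate cases cleanly since $d\geq 2$ guarantees $d/2\geq 1$. All steps check out; the only point worth making explicit is that the median of three vertices in a tree with pairwise even distances is itself a vertex, which is immediate from the Gromov product formula.
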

\begin{proof}
The lower bound comes from the following construction. Fix a base vertex: it divides the graph into $q$ branches. Now choosing $q$ vertices, one in each branch, at distance $d/2$ from the base vertex. Any two are at distance $d$, hence the lower bound.

The upper bound works as follows. Suppose by contradiction that there is a clique of size $c> q$ and consider the subgraph of $T_q$ spanned by the distance paths between the $c$ vertices. The vertices of the clique are the leaves in this subgraph $G$. It must contain at least $2$ branching points $v,w$ (vertices of degree at least $3$) as the inner degree is at most $q$. Removing the edges between $v$ and $w$ separates $G$ into two parts $G_v$ and $G_w$. Because the degrees of $v$ and $w$ were at least $3$, both $G_v$ and $G_w$ must contain at least two leaves of $G$. Let $v_1,v_2$, resp. $w_1,w_2$, be leaves of $G_1$, resp. $G_2$. 

We have
$$
d_{T_q}(v_1,v_2) = 2 d_{T_q}(v_1,v)
$$
and
$$
d_{T_q}(w_1,w_2) = 2 d_{T_q}(w_1,w)
$$
but
\begin{eqnarray*}
d_{T_q}(v_1,w_1) & = & d_{T_q}(v_1,v) + d_{T_q}(v,w) + d_{T_q}(w_1,w) \\
 &>& d_{T_q}(v_1,v) +  d_{T_q}(w_1,w) \\
 &\geq& 2 \min\{ d_{T_q}(v_1,v), d_{T_q}(w_1,w) \}
\end{eqnarray*}
and so either $v_1,v_2$ and $w_1$ or $w_1,w_2$ and $v_1$ cannot form a triangle, a contradiction.
\end{proof}
In certain low complexity cases, we can compute the chromatic number explicitly.
\begin{proposition} $\chi(T_3, 2) = 3$.
\end{proposition}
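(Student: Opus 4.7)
The lower bound $\chi(T_3, 2) \geq 3$ is immediate from Proposition \ref{prop:cliqueq}, which gives $\Omega(T_3, 2) = 3$, so the plan reduces to producing an explicit $3$-coloring that witnesses the upper bound.

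The starting observation is that in any tree, two vertices are at distance exactly $2$ if and only if they share a common neighbor. Consequently, a valid $3$-coloring of $T_3$ for the distance $2$ is nothing more than an assignment of colors in $\{0,1,2\}$ to the vertices such that, at every vertex $w$, the three neighbors of $w$ receive three distinct colors. Since $T_3$ is $3$-regular and we have three colors, this is the same as saying that the three neighbors of $w$ together exhaust the palette $\{0,1,2\}$.

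I would build such a coloring by propagation on the tree. Fix a base vertex $v_0$, regard $T_3$ as rooted at $v_0$, and for $w \neq v_0$ let $p(w)$ denote the parent of $w$. Set $c(v_0) = 0$ and distribute the colors $0,1,2$ in some order among the three neighbors of $v_0$. Then extend inductively: for each colored vertex $w$ whose parent $p(w)$ is also already colored, assign to the two children of $w$ the two elements of $\{0,1,2\} \setminus \{c(p(w))\}$, in either order.

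The verification that this is a valid coloring is a one-line check. For any non-root vertex $w$ with parent $p$ and children $u_1, u_2$, the propagation rule forces $\{c(u_1), c(u_2)\} = \{0,1,2\} \setminus \{c(p)\}$, and hence $\{c(p), c(u_1), c(u_2)\} = \{0,1,2\}$, as required. For $v_0$ itself, the initial assignment of $0,1,2$ to its three neighbors guarantees this directly. Because $T_3$ is a tree there are no cycles to create a conflict, so the propagation is globally consistent and delivers $\chi(T_3, 2) \leq 3$. There is no genuine obstacle in the argument: the only real content is the observation that the distance-$2$ condition reduces to a palette condition on three-neighbor sets, after which the existence of the coloring is automatic from the tree structure.
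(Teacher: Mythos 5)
Your proof is correct and follows essentially the same route as the paper: an iterative coloring propagated outward from a base vertex, with the lower bound coming from the triangle of three vertices around a common neighbor. Your reformulation of the distance-$2$ condition as ``every vertex's three neighbors receive all three colors'' makes the verification purely local and is in fact more explicit than the paper's argument, which colors the triangles of the distance-$2$ graph $G(T_3,2)$ level by level and appeals to a standard iteration.
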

\begin{proof}
Consider the graph $G(T_3,2)$ consisting of vertices of $T_3$ and edges between vertices is they are distance $2$ in $T_3$. 

The $G(T_3,2)$ is pretty easy to visualize. First of all, observe it has two connected components as it is impossible to travel between two vertices at odd distance in $T_3$. By homogeneity, both connected components are isomorphic. 

Take a vertex $v_0$ in $T_3$ and the three vertices it is connected to. Together they form a tripod. The three end vertices of this tripod are all pairwise distance $2$ apart so they form a triangle in $G(T_3,2)$. (Note they are not connected to $v_0$ in $G(T_3,2)$.) In particular $\chi(T_3,2)\geq 3$. 

Now each of these three vertices belongs to $2$ other triangles in $G(T_3,2)$ and the figure repeats itself (see Figure \ref{fig:GT32}). 
\begin{figure}[h]
{\color{linkblue}
\leavevmode \SetLabels
\L(0.52*.94) $1$\\
\L(0.46*.80) $2$\\
\L(0.505*.765) $3$\\
\L(0.65*.82) $1$\\
\L(0.59*.78) $2$\\
\L(0.63*.65) $3$\\
\L(0.545*.60) $1$\\
\L(0.45*.4) $2$\\
\L(0.56*.375) $3$\\
\L(0.65*.41) $1$\\
\L(0.62*.24) $2$\\
\L(0.662*.205) $3$\\
\L(0.574*.155) $1$\\
\L(0.51*.12) $2$\\
\L(0.554*.00) $3$\\
\L(0.38*.64) $1$\\
\L(0.32*.51) $2$\\
\L(0.385*.534) $3$\\
\L(0.365*.33) $1$\\
\L(0.34*.186) $2$\\
\L(0.382*.154) $3$\\
\endSetLabels
\begin{center}
\AffixLabels{\centerline{\epsfig{file =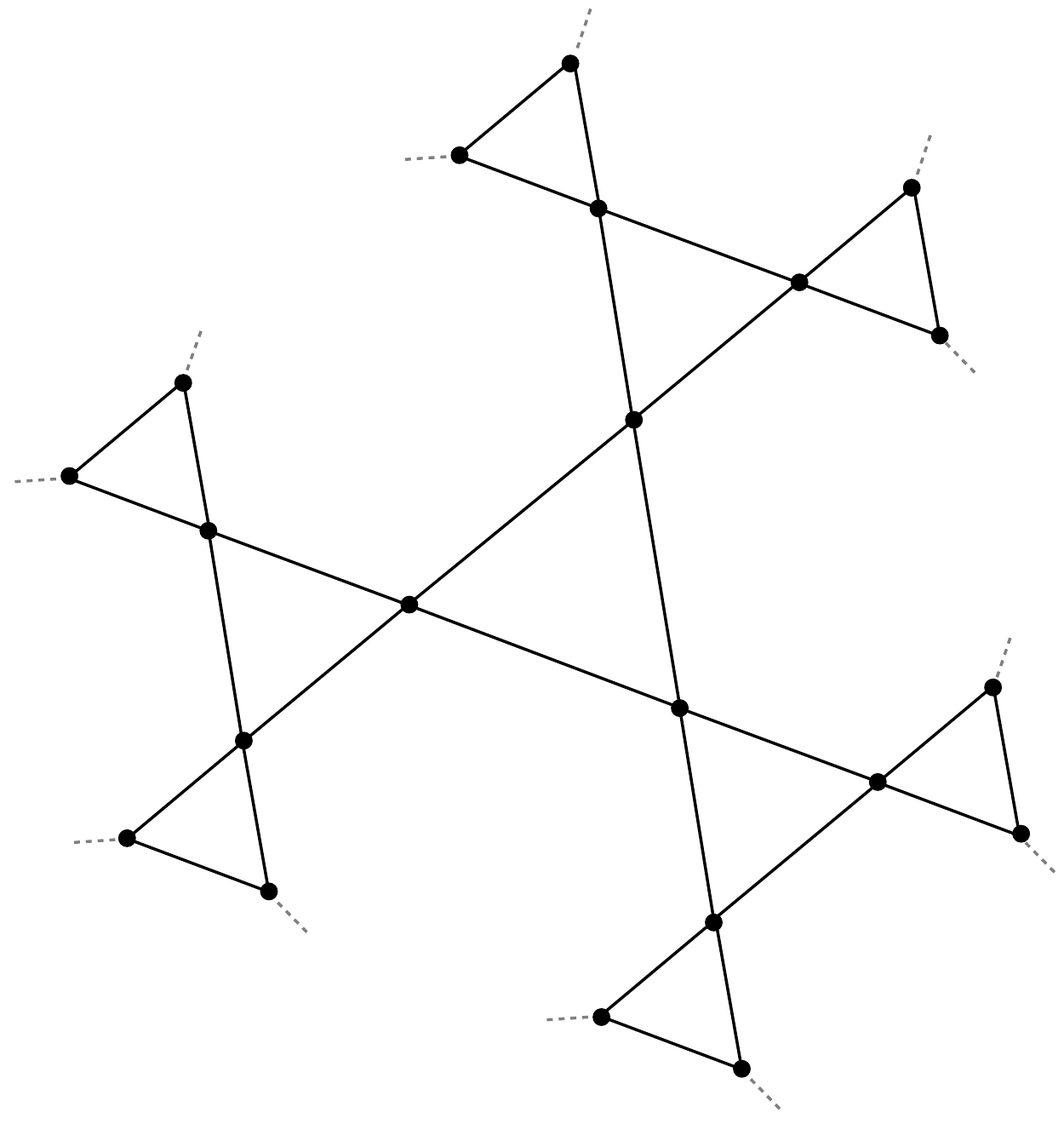,width=6.0cm,angle=0} }}
\vspace{-30pt}
\end{center}
\caption{A connected component of $G(T_3,2)$ and its coloring} \label{fig:GT32}
}
\end{figure}
There is a iterative $3$ coloring of this graph by first coloring the vertices of a base triangle, and then those belonging to the triangles attached level by level. The same colors can be used for both connected components and these shows the proposition.
\end{proof}

\begin{proposition}  \label{prop:moserq}
For any even $d\geq 4$ we have
$$\chi(T_q, d) \geq q+1.$$
\end{proposition}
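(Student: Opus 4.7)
The plan is to construct a generalized Moser spindle in the distance-$d$ graph of $T_q$, emulating the classical construction for $\R^2$. The key building block is a pair of $q$-cliques sharing $q-1$ vertices: if $K = S \cup \{A\}$ and $K' = S \cup \{A'\}$ are both $q$-cliques (pairwise at distance $d$) with $|S| = q-1$, then in any proper $q$-coloring the $(q-1)$-clique $S$ uses $q-1$ colors, so both $A$ and $A'$ must take the one remaining color. The spindle will glue two such pairs at a common apex vertex, forcing two vertices at distance exactly $d$ to share a color.

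To exploit this, I first observe that every $q$-clique in the distance-$d$ graph of $T_q$ is centered at a unique vertex $v$, with one representative in each of the $q$ branches at distance exactly $d/2$ from $v$; this is immediate from the tree structure and is essentially what the proof of Proposition~\ref{prop:cliqueq} establishes via the branching-point analysis. In particular, two $q$-cliques sharing $q-1$ vertices must have the same center $v$, the shared $(q-1)$-set occupies $q-1$ branches of $v$, and the two apices lie in the remaining branch at distance $d/2$ from $v$.

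Now I build the configuration. Fix $v_1 \in T_q$ with neighbors $w_A, w_1, \ldots, w_{q-1}$, and for $i = 1, \ldots, q-1$ pick $y_i$ at distance $d/2$ from $v_1$ in the $w_i$-branch. In the $w_A$-branch, let $P_1$ be any vertex at distance $d/2 - 1$ from $v_1$, and let $A, C$ be two distinct children of $P_1$, available because $P_1$ has $q-1 \geq 2$ children in that subtree. Then $\{A, y_1, \ldots, y_{q-1}\}$ and $\{C, y_1, \ldots, y_{q-1}\}$ are $q$-cliques centered at $v_1$, which forces $c(A) = c(C)$ in any $q$-coloring. Next, let $v_2$ be any vertex at distance $d/2$ from $A$ in the subtree of $A$ on the side opposite $v_1$, let $P_2$ be the neighbor of $v_2$ on the path $v_2 \to A$, and let $F$ be a vertex at distance $d/2 - 1$ from $P_2$ reached via one of the $q-2 \geq 1$ neighbors of $P_2$ lying off the $v_2 \to A$ path. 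Choosing $z_1, \ldots, z_{q-1}$ at distance $d/2$ from $v_2$ in the $q-1$ branches not containing $A$, the sets $\{A, z_1, \ldots, z_{q-1}\}$ and $\{F, z_1, \ldots, z_{q-1}\}$ are $q$-cliques centered at $v_2$, which forces $c(A) = c(F)$. Combined with the previous paragraph, any $q$-coloring would satisfy $c(C) = c(F)$.

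The final step is to verify $d_{T_q}(C, F) = d$, which yields the contradiction. The unique path in $T_q$ from $C$ ascends to $P_1$, passes through $A$, descends along the $A \to v_2$ segment to $P_2$, and finally branches off to $F$; summing these four legs,
$$
d_{T_q}(C, F) = d(C, P_1) + d(P_1, A) + d(A, P_2) + d(P_2, F) = 1 + 1 + \left(\frac{d}{2} - 1\right) + \left(\frac{d}{2} - 1\right) = d.
$$
Hence $c(C) = c(F)$ is impossible in a proper $q$-coloring, so $\chi(T_q, d) \geq q + 1$. The main subtlety is tuning the geometry of the spindle so that $d_{T_q}(C, F)$ lands exactly on $d$: choosing $A$ and $C$ as siblings (diverging as late as possible) while placing $F$ so it diverges from $A$ at $P_2$ (as early as possible from $v_2$'s side) is precisely what makes the two legs have lengths $1$ and $d/2 - 1$, and makes the total equal $d$. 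Both hypotheses $q \geq 3$ (needed for the branching at $P_1$ and the free direction at $P_2$) and $d \geq 4$ (needed so that $P_1, P_2$ are interior vertices of their respective paths and not equal to $v_1, v_2$) are used.
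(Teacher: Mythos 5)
Your proof is correct and takes essentially the same approach as the paper: both build a generalized Moser spindle out of two pairs of $q$-cliques, each pair sharing $q-1$ vertices so that the two apices are forced to the same color, with the two far tips at distance exactly $d$. Your write-up is in fact more explicit than the paper's, which asserts the existence of such a configuration without carrying out the distance verification that you do for $d_{T_q}(C,F)=d$.
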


\begin{proof}
We can embed a type of generalized Moser spindle in each of these graphs as follows. 

We take a base vertex $v_0$ and consider two sets of vertices $v_1,\hdots,v_{q-1}$ and $v'_1,\hdots,v'_{q-1}$ all at distance $d$ from $v_0$ and with the following property. Any two $v_i,v_j$, resp. $v'_i,v'_j$, for distinct $i,j$ are at distance $d$. We then consider two additional vertices $v_q$ and $v'_q$ at distance $d$ from one another and such that $v_q$ is distance $d$ from $v_i$ for $i=1,\hdots,q-1$ and $v'_q$ is distance $d$ from $v'_i$ for $i=1,\hdots,q-1$. An example for $q=4$ is illustrated in Figure \ref{fig:MoserSpindle}.

\begin{figure}[h]
{\color{linkblue}
\leavevmode \SetLabels
\L(0.49*.93) $v_0$\\
\L(0.19*-0.06) $v_1$\\
\L(0.24*-0.06) $v_2$\\
\L(0.28*-0.06) $v_3$\\
\L(0.29*.28) $v_4$\\
\L(0.695*-0.06) $v'_1$\\
\L(0.735*-0.06) $v'_2$\\
\L(0.78*-0.06) $v'_3$\\
\L(0.623*.605) $v'_4$\\
\endSetLabels
\begin{center}
\AffixLabels{\centerline{\epsfig{file =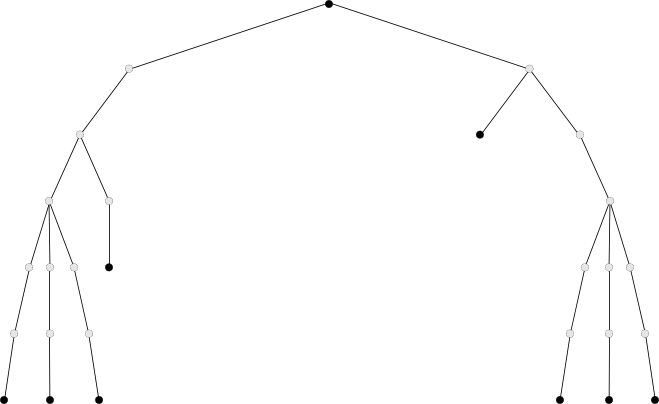,width=9.0cm,angle=0} }}
\vspace{-30pt}
\end{center}
\caption{The Moser spindle} \label{fig:MoserSpindle}
}
\end{figure}

Suppose now that it can be colored with $q$ colors. By construction, $q$ colors are needed to color the vertices $v_1,\hdots,v_q$ so $v_0$ has the same color as $v_q$. By symmetry, $v'_q$ must have the same color as $v_0$ and thus $v_q$ and $v'_q$ are the same color. This is a contradiction since $v_q$ and $v'_q$ are at distance $d$.
\end{proof}

\begin{remark}\label{rem:ammar}
By an exhaustive computer search, we checked the chromatic numbers $\chi(T_q,d)$ of certain finite subgraphs. Of what was computable, one notable result came up:
$$
\chi(T_{3},8) \geq 5.
$$
The subgraph of $T_{3}$ we considered to compute the lower bound was the graph consisting of all vertices at distance at most $8$ from a given base vertex.
\end{remark}

\section{Interval chromatic number problem}

We now focus our attention on bounding the $\Delta$-chromatic number $\chi((X,\delta),\Delta)$ when the metric space is the hyperbolic plane or a $q$-regular tree and for $\Delta := [d, cd]$ for some $d>0$ and some $c>1$. We re-use the same stratification of our spaces $\Hyp$ and $T_q$ and modify the coloring to obtain the upper bounds. The lower bounds are obtained by exhibiting cliques.

\subsection{Bounds for the hyperbolic plane}

By slightly adapting the proof of Theorem \ref{thm:chromahypupper1}, we obtain the following upper bound for large $d$. 

Note that our focus is how these bounds grow in terms of $d$ so in particular we'll use inequalities that possibly only hold for somewhat large values of $d$. Let's illustrate this by a simple example. We'll be using a bound on the $\arcsin(x)$ function. Although $\arcsin(x) > x$ for all $x>0$, they have the same behavior close to $0$, for sufficiently small $x$ we have the reverse inequality
$$
\arcsin(x) < 1.1 x.
$$

\begin{theorem}\label{thm:inthypupper}
Let $d >> 0$ be sufficiently large. Then 
$$
\chi(\Hyp, [d,cd]) < 2 (2 e^{\frac{cd-1}{2}} + 1)(cd +1).
$$
\end{theorem}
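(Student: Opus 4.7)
The plan is to adapt the horocyclic checkerboard strategy of Theorem~\ref{thm:chromahypupper1} to the interval setting. A checkerboard coloring is valid for the $[d,cd]$-problem as soon as two conditions hold: the diameter of each rectangle is strictly less than $d$, so within-rectangle distances miss $[d,cd]$ from below, and any two rectangles of the same color lie at distance strictly greater than $cd$, so cross-rectangle distances miss $[d,cd]$ from above. Given parameters $(w,h)$, the same formulas for diameter and horizontal separation used in Theorem~\ref{thm:chromahypupper1} show that cycling vertically among $\lceil cd/h\rceil + 1$ distinct color palettes (so consecutive same-color strata are at horocycle distance $>cd$), and cycling horizontally within each stratum with period $k+1$, where $k+1$ is the smallest integer strictly greater than $e^{h}\sqrt{(\cosh(cd)-1)/(\cosh(w)-1)}$, yields a valid coloring using
\[
(k+1)\bigl(\lceil cd/h\rceil + 1\bigr)
\]
colors.

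To match the stated bound I would take $h := 1/2$ and $w$ any fixed constant large enough that $\cosh(w)-1 \geq e^{2}/8$, for instance $w=2$. Then the vertical factor satisfies $\lceil cd/h\rceil + 1 = \lceil 2cd\rceil + 1 < 2(cd+1)$, strictly, since $\lceil x \rceil < x + 1$ for every real $x$. For the horizontal factor, using the elementary strict inequality $\cosh(cd)-1 < e^{cd}/2$ (valid for all $cd > 0$) together with the lower bound on $\cosh(w)-1$, one obtains
\[
e^{1/2}\sqrt{\frac{\cosh(cd)-1}{\cosh(w)-1}} \;<\; \frac{e^{1/2}\cdot e^{cd/2}}{\sqrt{2(\cosh(w)-1)}} \;\leq\; 2\,e^{(cd-1)/2},
\]
the second inequality being equivalent to $e^{2} \leq 8(\cosh(w)-1)$. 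Since $k+1$ is the least integer exceeding the left-hand side, this yields $k+1 \leq 2e^{(cd-1)/2}+1$, and multiplying the two estimates produces exactly $\chi(\Hyp,[d,cd]) < 2(2e^{(cd-1)/2}+1)(cd+1)$.

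The ``sufficiently large $d$'' hypothesis enters in two places. First, with $(h,w)=(1/2,2)$ the rectangle diameter is a constant (bounded by $w=2$), and we need $d$ to exceed this constant so that the within-rectangle condition holds. Second, the integer rounding involved in defining $k+1$ must be comfortably absorbed by the additive $+1$ in the final bound, which is automatic once $cd$ is large. There is no essential obstacle here beyond bookkeeping: the choice $h=1/2$ is precisely what delivers the external factor of $2$ on $(cd+1)$, while the condition $\cosh(w)-1 \geq e^{2}/8$ is calibrated to produce the coefficient $2$ in front of $e^{(cd-1)/2}$. One could equally well optimize over $(h,w)$ in the spirit of Theorem~\ref{thm:chromahypupper2}, but fixing these constants is enough to establish the asymptotic statement.
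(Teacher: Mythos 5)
Your strategy is exactly the paper's: the same horocyclic checkerboard, with color palettes recycled vertically every $\lceil cd/h\rceil+1$ strata and recycled horizontally within each stratum, and the fixed parameters $(h,w)=(1/2,2)$ are a legitimate alternative to the paper's choice $w=d$, $h=\log 4$. There is, however, one genuine error, in the horizontal period. If you cycle through $p$ colors with period $p$, the nearest two same-colored rectangles in a stratum have indices differing by exactly $p$ and are separated by the $p-1$ rectangles in between, so the infimum of the distance between them is
$$
\arccosh\left(1+\frac{(p-1)^2(\cosh(w)-1)}{e^{2h}}\right),
$$
and the separation condition is $p-1\geq e^{h}\sqrt{(\cosh(cd)-1)/(\cosh(w)-1)}=:Q$, not $p>Q$. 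This is precisely why Theorem \ref{thm:chromahypupper1} requires $k\geq e^h\sqrt{\cdots}$ and then spends $k+1$ colors per stratum. Your period, ``the smallest integer strictly greater than $Q$,'' equals $\lceil Q\rceil$ for non-integer $Q$, which is one too few: the gap between nearest same-colored rectangles is then only $\lfloor Q\rfloor<Q$ widths, the infimal distance between them is strictly below $cd$, and since the distances between two such rectangles sweep continuously up to values far above $cd$, some same-colored pair realizes a distance in $[d,cd]$. (Note that you did handle the analogous off-by-one correctly in the vertical direction, where index difference $m$ gives a gap of only $(m-1)h$.)

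The slip matters because your arithmetic is calibrated exactly to the undercount: replacing the period by the correct $\lceil Q\rceil+1$ turns your estimate $k+1\leq 2e^{(cd-1)/2}+1$ into $k+1\leq 2e^{(cd-1)/2}+2$, which overshoots the target constant. The argument is salvageable without changing parameters, because $w=2$ leaves much more room than the threshold $e^2/8\approx 0.92$ you calibrated to: since $\cosh(2)-1\approx 2.76$, one gets $Q<0.71\,e^{cd/2}<1.16\,e^{(cd-1)/2}$, hence $\lceil Q\rceil+1\leq Q+2\leq 2e^{(cd-1)/2}+1$ once $cd$ is larger than about $1.4$. But that extra slack has to be invoked explicitly; as written, the step fails and the stated constant is not reached.
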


\begin{proof}
We use the checkerboard as in the bound for the $d$-chromatic number choosing $w:=d$ and $h := \log(4)$ so as to ensure that each rectangle has diameter less than $d$ for sufficiently large $d$. 

We color stratum by stratum coloring every $(\lfloor cd \rfloor +1)$th stratum with the same colors. The main difference is in how we color a stratum. This time we need $k+1$ colors to color a stratum where $k$ is the smallest integer that satisfies 
\begin{equation}\label{eqn:boundk}
k \geq e^h \sqrt{\frac{\cosh(cd)-1}{\cosh(d)-1}} = 4 \sqrt{\frac{\cosh(cd)-1}{\cosh(d)-1}} .
\end{equation}
The value $(k+1)(cd+1)$ is an upper bound. Via a small manipulation, Equation \eqref{eqn:boundk} is certainly true provided 
$$
k \geq 4 \, e^{\frac{cd-1}{2}}
$$
for large enough $d$. Thus 
$$
2 (2 e^{\frac{cd-1}{2}} + 1)(cd +1)
$$
is an upper bound.
\end{proof}

We now focus on lower bounds. To do so we will exhibit large cliques to bound $\Omega(\Hyp,[d,cd])$ from below.

\begin{theorem}\label{thm:inthyplower}
For $d>>0$ sufficiently large
$$\Omega(\Hyp,[d,cd]) > 2 \, e^{\frac{cd-1}{2}}.
$$
\end{theorem}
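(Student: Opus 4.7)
The plan is to exhibit a clique of the claimed size by placing $N$ points equally spaced on a hyperbolic circle $C$ of radius $r = cd/2$ centered at any basepoint. Because the diameter of $C$ equals $2r = cd$, any two of these points lie at hyperbolic distance at most $cd$ automatically, so the only condition to check is the lower bound $\geq d$ on pairwise distances, and this will be threatened only by the adjacent pair.

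To control the adjacent distance I apply the hyperbolic law of cosines to the isosceles triangle with apex at the center of $C$ and base joining two points of $C$ subtending central angle $\theta$: the side $\ell$ opposite the apex satisfies $\sinh(\ell/2) = \sinh(r)\sin(\theta/2)$. For adjacent equidistributed points $\theta = 2\pi/N$, so the condition $\ell \geq d$ rewrites as
$$
\sin(\pi/N) \geq \frac{\sinh(d/2)}{\sinh(cd/2)},
$$
and the largest admissible $N$ is $\lfloor \pi / \arcsin(\sinh(d/2)/\sinh(cd/2)) \rfloor$.

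What remains is asymptotics. For $d$ large the right-hand side above is small, so the paper's standing inequality $\arcsin(x) < 1.1\, x$ (noted just before the theorem) applies. Combined with the elementary bounds $\sinh(t) \leq e^t/2$ and $\sinh(t) \geq (e^t - 1)/2$, this turns the lower bound on $N$ into an explicit exponential in $d$ and $cd$, which I would then massage into the form $2\,e^{(cd-1)/2}$ stated in the theorem. Since a set of $N$ such points is, by construction, a clique in the graph $G(\{\Hyp,d_\Hyp\},[d,cd])$, we conclude $\Omega(\Hyp,[d,cd]) \geq N$.

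The main obstacle is the tuning of constants and error terms: the circle construction naturally produces a count of the order $\sinh(cd/2)/\sinh(d/2)$, and one has to verify that, in the asymptotic range of $d$ used by the authors, this exceeds $2\,e^{(cd-1)/2}$ with enough slack. Past this bookkeeping the argument is just hyperbolic trigonometry together with the standard large-$d$ estimates used throughout the paper.
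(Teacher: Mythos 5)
Your proposal is essentially the paper's own proof: the authors also place points on a hyperbolic circle (taking them ``successively exactly $d$ apart'' rather than exactly equally spaced, an immaterial difference), use the same isosceles-triangle identity $\sinh(d/2)=\sin(\theta/2)\sinh(cd/2)$, and invoke the same bound $\arcsin(x)<1.1x$ to count the points. The ``bookkeeping'' you flag as the main obstacle is the one genuine issue, and it lies in the statement rather than in your argument: the construction yields about $\frac{\pi}{1.1}\cdot\frac{\sinh(cd/2)}{\sinh(d/2)}\sim e^{\frac{(c-1)d}{2}}$ points, which for large $d$ does \emph{not} exceed $2\,e^{\frac{cd-1}{2}}$ as printed (it falls short by a factor of order $e^{\frac{d-1}{2}}$), so the exponent in the theorem should be read as $\frac{(c-1)d}{2}=\frac{cd-d}{2}$; with that correction your proof closes exactly as the paper's does.
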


\begin{proof}
We choose a point $x_0 \in \Hyp$ and consider the circles $C$ of radius $\frac{c - 1}{2}d$. We now choose a maximal set of points $x_1, \hdots, x_n$ on $C$ that are successively exactly $d$ apart and $d_{\Hyp}(x_1,x_n) \geq d$. By construction the points satisfy $d_\Hyp(x_i,x_j) \in [d,cd]$ for $i,j\in \{1,\hdots,n\},$ $i\neq j$. 

We now need to estimate $n$ in function of $d$ and $c$. To do so we look at the angle $\theta$ in $x_0$ formed by a triangle $x_0, x_j, x_{j+1}$. By hyperbolic trigonometry in the triangle we have
$$
\sinh \left(\frac{d}{2}\right) = \sin\left( \frac{\theta}{2}\right)  \sinh\left(\frac{cd}{2}\right) 
$$
so 
$$
\theta = 2 \arcsin \left(\frac{\sinh\left(\frac{d}{2} \right) }{ \sinh\left(\frac{cd}{2}\right) }\right).
$$
From this
$$
n\geq \frac{2\pi}{\theta} = \frac{\pi}{\arcsin\left( \frac{\sinh\frac{d}{2} }{ \sinh\left(\frac{cd}{2}\right) } \right) }>  2 \, e^{\frac{cd-1}{2}}.
$$
\end{proof}

Obviously in the above proof, we could optimize the constant in front of the leading term but it's really the order of growth we're interested in. Put together, Theorems \ref{thm:inthypupper} and \ref{thm:inthyplower} tell us that, up to linear factor in $cd$, $\chi(\Hyp, [d,cd])$ grows like $e^{\frac{cd-1}{2}}$.
\subsection{Bounds for $k$-trees}

We begin with an upper bound which works almost identically to Theorem \ref{thm:puretree}.

\begin{theorem}\label{thm:inttreeupper}
$$\chi(T_q,[d,cd]) \leq (q-1)^{\lfloor \frac{cd}{2} +1 \rfloor} (\lfloor cd \rfloor+1)$$
\end{theorem}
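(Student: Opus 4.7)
The plan is to adapt the stratum-by-stratum coloring from Theorem~\ref{thm:puretree} to the interval setting. Using the horocyclic decomposition $(S_k)_{k \in \Z}$ of $T_q$, the coloring will use $\lfloor cd \rfloor +1$ disjoint palettes, each of size $(q-1)^{\lfloor cd/2 \rfloor +1}$, reused periodically along the strata, giving the stated bound.

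For the vertical repetition, I would first observe that for any $v \in S_k$ and $w \in S_{k'}$, if their nearest common ancestor (toward $\eta$) lies in $S_m$ with $m \leq \min(k,k')$, then $d_{T_q}(v,w) = (k-m)+(k'-m) \geq |k-k'|$. Whenever $|k-k'| \geq \lfloor cd \rfloor +1$ the distance therefore exceeds $cd$, so the same palette may safely be assigned to such pairs of strata; concretely I would cyclically assign palettes $P_0,\ldots,P_{\lfloor cd \rfloor}$ to $S_k$ according to $k \bmod (\lfloor cd \rfloor +1)$.

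For the intra-stratum coloring, set $N := \lfloor cd/2 \rfloor +1$ and partition each $S_k$ into blocks, where a block is the set of vertices of $S_k$ sharing a given common ancestor in $S_{k-N}$. Each block contains exactly $(q-1)^N$ vertices (one for each descent of length $N$ away from $\eta$), and inside a block two vertices may be at any even distance up to $2N$, which can fall in $[d,cd]$; so I give each vertex of a block its own distinct color from a palette of size $(q-1)^N = (q-1)^{\lfloor cd/2 \rfloor +1}$. Two vertices lying in different blocks of the same stratum $S_k$ have their nearest common ancestor at level at least $N+1$, hence at distance at least $2(N+1) > cd$, so the palette can be reused across all blocks of $S_k$ without conflict.

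The only thing to verify is the inequality $2(\lfloor cd/2 \rfloor +2) > cd$, which is immediate. I do not expect a real obstacle here; the bound is clearly not tight (one could, as in the pure case, bundle further inside each block to exploit that distances strictly below $d$ also lie outside $[d,cd]$, replacing $N$ by something closer to $\lfloor cd/2 \rfloor - \lceil d/2 \rceil +1$), but the coarse strategy just described already produces the claimed inequality.
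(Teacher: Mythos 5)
Your proof is correct and follows essentially the same route as the paper: horocyclic strata with palettes reused with period $\lfloor cd\rfloor+1$, and within a stratum a partition into ``super bundles'' of vertices sharing an ancestor at depth $\lfloor \frac{cd}{2}\rfloor+1$, each of size $(q-1)^{\lfloor \frac{cd}{2}\rfloor+1}$. The only difference is that you color vertices of a block individually whereas the paper first merges vertices with a common ancestor at depth $\frac{d-2}{2}$ into monochromatic bundles; since the paper's color count is anyway the number of vertices in a super bundle, this changes nothing, and you correctly note the resulting slack in the exponent.
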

\begin{proof}
The proof is very similar to the proof of Theorem \ref{thm:puretree} so we'll mainly highlight the differences. 

Using a horocyclic decomposition we color each stratum separately and reuse the colors for strata $\lfloor cd\rfloor+1$ apart. 

For a given stratum: we begin by creating bundles of vertices where vertices belong to the same bundle if they have a common root at distance $\frac{d-2}{2}$. We now create a super bundle consisting of all bundles with vertices that have a common ancestor at distance at most $\lfloor \frac{cd}{2} +1 \rfloor$. All vertices of a bundle are colored by the same color and any two bundles in a same super bundle are colored differently. This requires $(q-1)^{\lfloor \frac{cd}{2}+1 \rfloor}$ colors. 

These same colors can be used to color any other super bundle as two vertices that lie in different super bundles are at least $2 \lfloor \frac{cd}{2}+1 \rfloor > cd$ apart. 
\end{proof}

The lower bound follows the same idea as the lower bound of the corresponding theorem for the hyperbolic plane. 

\begin{theorem}\label{thm:inttreelower}
$$\Omega(T_q,[d,cd]) \geq q (q-1)^{\lfloor \frac{cd}{2} \rfloor  - \lceil\frac{d}{2} \rceil}$$
\end{theorem}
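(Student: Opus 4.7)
The plan is to construct an explicit clique of the claimed size. Fix a base vertex $v_0 \in T_q$ and think of $T_q$ as rooted at $v_0$. Set $r := \lfloor cd/2 \rfloor$ and $s := \lfloor cd/2 \rfloor - \lceil d/2 \rceil$. The first step is the standard tree identity: for any two vertices $u, w$ at distance $r$ from $v_0$, writing $m$ for their lowest common ancestor in the rooted tree and $k := d_{T_q}(v_0, m)$, the unique-geodesic property of the tree gives
$$ d_{T_q}(u, w) = 2r - 2k.$$

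Next I would produce the candidate clique. There are exactly $q(q-1)^s$ vertices at distance $s + 1$ from $v_0$; from each of them, pick (arbitrarily) one descendant at further distance $r - s - 1$. This yields a set $S$ of $q(q-1)^s = q(q-1)^{\lfloor cd/2\rfloor - \lceil d/2\rceil}$ vertices, all at distance $r$ from $v_0$.

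The verification that $S$ is a $[d,cd]$-clique is then almost free from the identity above. The upper bound $d_{T_q}(u, w) \leq 2r \leq cd$ holds automatically since $k \geq 0$. For the lower bound, any two distinct elements of $S$ descend from distinct vertices at level $s+1$, so their geodesics from $v_0$ must branch off at or before level $s$; hence $k \leq s$, which yields $d_{T_q}(u, w) \geq 2r - 2s = 2\lceil d/2 \rceil \geq d$.

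The only subtle point — really the only step that deserves to be called the main obstacle — is the floor/ceiling bookkeeping. The lower bound constraint $d_{T_q}(u,w)\geq d$ reads $2r - 2k \geq d$, i.e.\ $k \leq r - d/2$; since $k$ is an integer and $r$ is an integer, this is equivalent to $k \leq \lfloor cd/2 \rfloor - \lceil d/2 \rceil = s$, which is exactly the threshold that the construction is tuned to saturate. Once this bookkeeping is in place, counting the chosen vertices gives the stated lower bound on $\Omega(T_q,[d,cd])$.
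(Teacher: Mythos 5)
Your proposal is correct and follows essentially the same route as the paper: fix a root $v_0$, take the sphere at an inner radius to index branches, push each branch out to a companion vertex at radius $\lfloor cd/2\rfloor$, and use the identity $d_{T_q}(u,w)=2r-2k$ to check that all pairwise distances land in $[d,cd]$. Your choice of inner radius $s+1$ rather than $s$ is a slight improvement: the paper's sphere of radius $s$ contains only $q(q-1)^{s-1}$ vertices, so your version is the one that actually delivers the stated count $q(q-1)^{\lfloor cd/2\rfloor-\lceil d/2\rceil}$ while still satisfying the distance constraints.
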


\begin{proof}
Consider a vertex $v_0$ in $T_q$ and the set $S_1$ of all vertices distance $\lfloor \frac{cd}{2} \rfloor  - \lceil\frac{d}{2} \rceil$ from $v_0$. Let $S_2$ be the set of vertices distance $\lfloor \frac{cd}{2} \rfloor  $ from $v_0$. 

Now for each vertex $v$ of $S_1$ , we associate exactly one companion vertex $v' \in S_2$ such that $v$ is on the geodesic between $v'$ and $v_0$. Denote the set of companion vertices $S$. 

Now if $v',w' \in S$ are distinct, then
$$\delta (v',w') \geq d$$
but 
$$
\delta (v',w') \leq cd.$$
Furthermore $| S | =  | S_1 |$ and as $T_q$ is $q$ regular, we have
$$
| S_1 | = q (q-1)^{\lfloor \frac{cd}{2} \rfloor  - \lceil\frac{d}{2} \rceil}
$$
as desired.
\end{proof}
\addcontentsline{toc}{section}{References}
\bibliographystyle{amsplain}
\providecommand{\bysame}{\leavevmode\hbox to3em{\hrulefill}\thinspace}
\providecommand{\MR}{\relax\ifhmode\unskip\space\fi MR }
\providecommand{\MRhref}[2]{%
  \href{http://www.ams.org/mathscinet-getitem?mr=#1}{#2}
}
\providecommand{\href}[2]{#2}

\end{document}